\documentclass[11pt,letterpaper]{amsart}

\usepackage{geometry}
\usepackage[english]{babel}
\usepackage{amsmath,amssymb,mathtools,enumerate,latexsym,graphicx,cite}
\usepackage{xcolor}
\usepackage{hyperref}
\hypersetup{
  colorlinks=true,
  linkcolor=black,
  citecolor=black,
  urlcolor=black,
  pdftitle={Codimension-one holomorphic Anosov diffeomorphisms},
  pdfauthor={Jiesong Zhang}
}

\newtheorem{theorem}{Theorem}[section]
\newtheorem{lemma}[theorem]{Lemma}

\newtheorem{proposition}[theorem]{Proposition}

\newtheorem{definition}[theorem]{Definition}
\theoremstyle{remark}

\newtheorem{mainthm}{\bfseries Theorem}

\newcommand{\R}{\mathbb R}
\newcommand{\Z}{\mathbb Z}

\newcommand{\id}{\operatorname{id}}

\newcommand{\ol}{\overline}
\newcommand{\wt}{\widetilde}

\newif\ifshowchanges
\showchangestrue
\newcommand{\revisioncolor}{%
  \ifshowchanges
    \hypersetup{linkcolor=red,citecolor=red,urlcolor=red}\color{red}%
  \fi}

\title[Codimension-one holomorphic Anosov diffeomorphisms]
{Codimension-one holomorphic Anosov diffeomorphisms}

\author{Jiesong Zhang}
\address{Department of Mathematics\\
Kungliga Tekniska h\"ogskolan, Lindstedtsv\"agen 25\\
SE-100 44 Stockholm\\
Sweden}
\email{jiesongz@kth.se}

\begin{document}

\begin{abstract}
We prove that every codimension-one holomorphic Anosov diffeomorphism of a compact connected complex manifold is biholomorphically conjugate to a hyperbolic automorphism of a complex torus. This verifies a conjecture of Ghys \cite{ghys_1995} in the codimension-one case and, in particular, in complex dimension three.
\end{abstract}

\maketitle

\tableofcontents

\section{Introduction}
A diffeomorphism $f\colon M\to M$ of a connected compact Riemannian manifold is called \emph{Anosov} if there is a continuous $Df$-invariant splitting
\[
  TM=E^s\oplus E^u
\]
and an integer $k\geq1$ such that, for every $x\in M$,
\[
  \bigl\|Df^k|_{E^s(x)}\bigr\|<1
  \quad\text{and}\quad
  \bigl\|(Df^k|_{E^u(x)})^{-1}\bigr\|<1.
\]

Franks and Manning proved that an Anosov diffeomorphism on an infra-nilmanifold is
topologically conjugate to a hyperbolic affine automorphism \cite{franks,manning_1974}. The conjugacy and its inverse are H\"older continuous, but the conjugacy need not be a $C^1$ diffeomorphism in general:
a $C^1$ conjugacy identifies the eigenvalues of the corresponding derivative return maps at periodic points,
whereas these eigenvalues can be changed by arbitrarily small local $C^\infty$ perturbations. A conjecture going back to Anosov and Smale asserts that every compact manifold supporting an Anosov diffeomorphism is homeomorphic to an infra-nilmanifold \cite{smale_1967}.

The dynamics of holomorphic diffeomorphisms on complex manifolds are often more rigid than those of general differentiable maps, owing to the compatibility between the dynamics and the complex structure. In \cite{ghys_1995}, Ghys proposed the following conjecture.\smallskip

\begin{quote}\itshape Every holomorphic Anosov diffeomorphism is biholomorphically conjugate to an affine automorphism of a complex infra-nilmanifold.\end{quote}

Ghys verified the conjecture for complex surfaces \cite[Theorem~A]{ghys_1995}. Cantat \cite[Theorem~1.4]{cantat_2004} proved the corresponding classification for projective manifolds, assuming either that both invariant distributions are holomorphic or that the diffeomorphism has complex codimension one. Zhang \cite{zhang_2026} proved a regularity bootstrap and algebraic classification for holomorphic Anosov diffeomorphisms that are bi-Lipschitz conjugate to affine models.

In higher dimensions, the conjecture remains open in general. In this note, we prove it for holomorphic Anosov diffeomorphisms of complex codimension one, meaning that $\dim_\mathbb C E^s=1$ or $\dim_\mathbb C E^u=1$.
\begin{mainthm}\label{theorem rigidity}
Let \(f\colon X\to X\) be a codimension-one holomorphic Anosov diffeomorphism of a compact complex manifold. Then $X$ is biholomorphic to a complex torus, and $f$ is biholomorphically conjugate to a hyperbolic holomorphic torus automorphism.
\end{mainthm}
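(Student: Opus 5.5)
Replacing \(f\) by \(f^{-1}\) if necessary, we assume \(\dim_{\mathbb C}E^u=1\) and write \(n=\dim_{\mathbb C}X\). The plan is to show first that \(X\) is a torus, and then to promote the Franks--Manning topological conjugacy to a biholomorphic one.

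\textbf{Step 1 (structure of the splitting).} Since \(Df\) is complex linear, \(E^s\) and \(E^u\) are complex subbundles, and the stable and unstable foliations have holomorphic leaves: each stable leaf is biholomorphic to \(\mathbb C^{n-1}\) and each unstable leaf to \(\mathbb C\). On the one-dimensional bundle \(E^u\), \(Df\) acts conformally, since every complex linear map of a complex line is a similarity. This conformality is the key extra rigidity. I would use a Sullivan-type argument (as in Ghys' surface case) to show that \(E^u\) is \(C^{1+\alpha}\), and then to upgrade it to a holomorphic line subbundle; equivalently, the unstable foliation is transversely holomorphic. Along the way I would show that the holonomies of the codimension-\((n-1)\) stable foliation between unstable leaves are conformal, hence holomorphic, because they intertwine the conformal one-dimensional dynamics. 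This gives a transversely holomorphic stable foliation of complex codimension one.

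\textbf{Step 2 (topology).} By Newhouse's theorem, a codimension-one Anosov diffeomorphism is topologically conjugate to a hyperbolic toral automorphism, and in particular \(X\) is homeomorphic to \(T^{2n}\). The Franks--Manning conjugacy \(h\colon X\to \mathbb C^n/\Lambda\) sends stable and unstable leaves to affine leaves. Restricted to unstable leaves, \(h\) conjugates conformal one-dimensional dynamics to a linear map. By Step 1 and the usual argument via linearizing coordinates on the leaves \(W^u(x)\cong\mathbb C\) (the affine structure coming from the normal form of a contracting conformal map), \(h\) is holomorphic, hence affine, along unstable leaves.

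\textbf{Step 3 (the stable direction, the main obstacle).} Along the \((n-1)\)-dimensional stable leaves, \(Df\) is not conformal, so the argument of Step 2 does not apply directly. The approach I would try first is to avoid working leafwise there. Using Step 1, the stable foliation is transversely holomorphic with transverse holomorphic affine structure given by the unstable linearizations, so \(X\) carries a holomorphic \(1\)-form \(\omega\) defining it, namely \(\ker\omega=E^s\), with \(f^*\omega=\lambda\omega\). Next I would show \(h^1(X,\mathcal O)\ge n\) (and \(h^0(\Omega^1)=n\)) by using the torus topology together with the \(f^*\)-action on \(H^1(X,\mathbb C)\): the eigenvalues of \(f^*\) on \(H^1\) are the conjugates of \(\lambda\), and iterating \(f^*\) on \(\omega\)-type classes, via Galois-conjugate eigenvectors, yields \(n\) independent holomorphic \(1\)-forms. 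The Albanese map \(X\to \mathrm{Alb}(X)\) would then be a holomorphic map inducing an isomorphism on \(H_1\), and on a homotopy torus of the same dimension it should be a finite, hence bijective, holomorphic map. That makes it a biholomorphism, and it conjugates \(f\) to an affine, hence hyperbolic linear up to translation, automorphism.

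The hardest point is producing the \(n\) independent holomorphic forms without assuming \(X\) is Kähler. If the Galois/\(f^*\) argument fails, I would instead aim to prove that \(E^s\) is holomorphic via Cantat-style arguments, and then invoke the bi-Lipschitz rigidity result of \cite{zhang_2026} once \(h\) is shown to be Lipschitz.
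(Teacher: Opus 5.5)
Your outline has genuine gaps at each of its three substantive steps. The first is in Step~2: Newhouse's theorem does not apply. Franks--Newhouse classifies Anosov diffeomorphisms of \emph{real} codimension one, while here $E^s$ and $E^u$ both have real dimension at least $2$. So the topological classification is missing, and with it everything downstream that uses the torus, the Franks--Manning conjugacy, or global product structure on $\widetilde X$. Separately, a topological conjugacy between a conformal leafwise expansion and a real-linear one need not be smooth, so your claim that $h$ is holomorphic along unstable leaves is also unsupported.

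Ghys proved the topological statement only under transitivity. The paper removes that hypothesis as follows, in its normalization $\dim_{\mathbb C}E^s=1$ (your $f^{-1}$):
\begin{enumerate}
\item The codimension-one bundle $E^u$ is holomorphic by Ghys's conformal-holonomy argument. This is the correct part of your Step~1. Hence $L=T^{1,0}X/E^u$ is a holomorphic line bundle.
\item The unique continuous $f$-invariant connection on $L$ with $(0,1)$-part $\bar\partial_L$ and Bott restriction along $E^u$ is holomorphic and flat. This follows from an $L^2$ bound on the curvature of the iterated pullbacks, which decays like $\kappa^{-2(\dim_{\mathbb C}X-2)k}$ ($\kappa>1$ the expansion rate of $E^u$). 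That is where $\dim_{\mathbb C}X\ge3$ is used.
\item Parallel sections give transverse coordinates with affine transitions. These yield linearizations of the stable leaves in which local unstable holonomies are complex-linear, so the holonomy extends over entire stable leaves. That is exactly what transitivity supplied in Ghys's Theorem~B.
\end{enumerate}

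The second gap is the real crux: holomorphicity of the \emph{one-dimensional} bundle, which Step~1 asserts via a Sullivan-type argument. That argument cannot work. The holonomy of the one-dimensional foliation acts between the $(n-1)$-dimensional complementary leaves, where $Df$ is not conformal, and in complex dimension $\ge2$ conformality and holomorphicity are no longer equivalent. Your Step~3 substitute also fails:
\begin{enumerate}
\item Since $f^*\omega=\lambda\omega$, iterating produces only multiples of $\omega$.
\item Galois conjugation of eigenvectors in $H^1(X,\mathbb C)$ does not preserve Hodge type; complex conjugation sends $[\omega]$ to $[\bar\omega]$.
\item Without a K\"ahler hypothesis there is no Hodge decomposition to fall back on. The nonstandard complex structures on real tori show that torus topology plus one holomorphic form cannot force $n$ independent holomorphic $1$-forms.
\item The Albanese step (periods forming a lattice, finiteness of the map) is unsupported.
\item Your fallbacks need projectivity (Cantat) or a bi-Lipschitz conjugacy you have no way to produce.
\end{enumerate}

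The missing idea is an $L^2$ argument built on your form. In the paper's normalization it is a closed holomorphic $\theta$ with $\ker\theta=E^u$ and $f^*\theta=\mu\theta$, $0<|\mu|<1$. Its construction needs global product structure on $\widetilde X$ and the fact that deck transformations act on the leaf space $\mathbb C$ without fixed points, hence by translations; a transverse affine structure alone does not give a global closed form.

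Take a smooth $e_0$ with $\theta(e_0)=1$ and set $e_k=\mu^k(Df^k)^{-1}(e_0\circ f^k)$. Then $e_k$ converges uniformly to the section $e$ of $E^s$ normalized by $\theta(e)=1$. Because $\theta$ is holomorphic, $\bar\partial e_0$ is $E^u$-valued. This gives $|\bar\partial e_k|^2\le C\kappa^{-2k}J_\nu(f^k)$ for a smooth volume $\nu$. Hence $\|\bar\partial e_k\|_{L^2}\to0$, and $e$ is holomorphic.

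With both foliations holomorphic, no Albanese map is needed. Split each lattice vector $\gamma$ of the linear model as $p_s\gamma+p_u\gamma$ along its stable and unstable subspaces. Conjugating translation by $p_s\gamma$ or by $p_u\gamma$ through the lifted conjugacy gives a biholomorphism, since in the holomorphic product structure it acts as the deck transformation on one factor and trivially on the other. Density of $p_s(\Gamma)$ and $p_u(\Gamma)$ then makes every translation biholomorphic. Bochner--Montgomery makes this transitive $\mathbb R^{2n}$-action smooth, so the complex structure is translation-invariant.
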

In particular, Theorem~\ref{theorem rigidity} verifies Ghys's conjecture for compact complex threefolds: since both invariant distributions are nonzero, one of them must have complex dimension one.

\subsection{Strategy of the proof}
For the rest of this section, let $f\colon X\to X$ be a codimension-one holomorphic Anosov diffeomorphism of a compact complex manifold of complex dimension $d$. Replacing $f$ by $f^{-1}$ if necessary, we assume that $\dim_\mathbb C E^s=1$.
\subsubsection*{Topological conjugacy} The first step is to establish the topological conjugacy.
\begin{theorem}\label{theorem topo}
Let $f\colon X\to X$ be a holomorphic Anosov diffeomorphism of a compact complex manifold of complex dimension $d\ge2$, with $\dim_\mathbb C E^s=1$. Then $X$ is homeomorphic to $\mathbb T^{2d}$, and $f$ is topologically conjugate to a hyperbolic toral automorphism.
\end{theorem}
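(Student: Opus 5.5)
The plan is to reduce to the classical theorem of Franks and Newhouse: a codimension-one Anosov diffeomorphism, meaning one whose stable or unstable bundle has real dimension one, on a compact manifold of dimension at least three is topologically conjugate to a hyperbolic toral automorphism. Here, however, $\dim_\R E^s=2$, so Newhouse's theorem does not apply directly. The strategy is therefore to mimic the Franks–Newhouse argument in real codimension two, using the holomorphy of the stable leaves to recover the one-dimensional rigidity that Newhouse exploits.

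\textbf{Step 1: $f$ is transitive, so $\Omega(f)=X$.} The stable leaves $W^s(x)$ are injectively immersed copies of $\mathbb C$, with holomorphic parametrizations coming from the stable manifold theorem for contracting holomorphic maps. The first aim is to show that the stable foliation is minimal, or at least that $\Omega(f)=X$. Following Newhouse, I would suppose $\Omega(f)\neq X$ and take a basic set $\Lambda$ that is an attractor for $f^{-1}$ in the spectral decomposition. Such a repeller is a union of unstable leaves, and a codimension-one counting argument, now in complex codimension one, should force $\Lambda$ to be all of $X$. The key input replacing real one-dimensionality is that $f$ restricted to a stable leaf is a holomorphic contraction of $\mathbb C$. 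This means it is affine in suitable linearizing coordinates: the nonstationary linearization of a one-dimensional holomorphic contraction is an affine family. So each $W^s(x)$ carries a canonical affine structure, varying continuously in $x$, and hence well-defined translation lengths.

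\textbf{Step 2: the stable foliation has a transverse invariant measure.} The affine structure on stable leaves gives holonomy maps along unstable leaves that are affine, and therefore conformal, in the stable direction. From this I would build a $Df$-equivariant transverse structure to the unstable foliation, namely a leafwise flat metric on $E^s$ scaled by a cocycle. Then, as in Franks's argument, I would show that the unstable foliation has no compact leaves and that its lift to the universal cover $\wt X$ is a product. Concretely, each lifted stable leaf $\wt W^s\cong\mathbb C$ meets each lifted unstable leaf in exactly one point, giving $\wt X\cong\R^2\times\R^{2d-2}$, and $\pi_1(X)$ acts by deck transformations that are translations in the affine stable coordinate.

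\textbf{Step 3: identify $\pi_1$ and apply Franks–Manning.} From the global product structure, $\pi_1(X)$ acts freely on $\wt W^s\cong\mathbb C$ by affine maps. Equivariance under the contraction $f$ forces these maps to be translations, which makes $\pi_1(X)$ abelian, in analogy with Plante's and Franks's codimension-one arguments. The induced map $f_*$ on $H_1(X;\R)$ is then hyperbolic, because the stable translation lengths are contracted. Franks's theorem, that an Anosov diffeomorphism with $\Omega=X$ and abelian $\pi_1$ whose homology action is hyperbolic is conjugate to a toral automorphism, then yields a topological conjugacy to a hyperbolic automorphism of $\mathbb T^{2d}$.

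\textbf{Main obstacle.} I expect the hardest step to be Step 1 together with the product-structure part of Step 2. Newhouse's argument uses the orientable real-line order on one-dimensional leaves, and that order is unavailable for complex lines. My first attempt would replace order arguments with the affine structure and a Liouville-type argument: a bounded holomorphic function on a stable leaf $\mathbb C$ is constant. I would use this to rule out an unstable leaf meeting a stable leaf twice, and to rule out a nontrivial repeller.
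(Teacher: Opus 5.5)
Your proposal has a genuine gap: the step everything rests on is asserted, not proved. In Step~2 you say the leafwise affine structure ``gives holonomy maps along unstable leaves that are affine.'' This does not follow.

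The $f$-equivariant linearizations $\varphi_x\colon W^s(x)\to E^s_x$ do exist. Ghys's Proposition~2.2 makes $E^u$ holomorphic, so local unstable holonomy is holomorphic, i.e.\ conformal. But nothing forces $\varphi_y\circ h^u_{xy}\circ\varphi_x^{-1}$ to be affine. The dynamics give no direct control: conjugating by $f^n$ pushes $x$ and $y$ apart along $W^u$, while conjugating by $f^{-n}$ blows the stable discs up.

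Your inference would apply equally to a $C^\infty$ Anosov diffeomorphism of $\mathbb T^3$ with $\dim E^s=1$, whose stable leaves also carry canonical invariant affine structures. There, affine unstable holonomy would give an $f$-invariant transverse affine structure to $W^u$. That would force all periodic points to share the same stable exponent, which a generic local perturbation destroys. So affinity is a genuine rigidity statement needing new input, and your argument never uses $d\ge3$.

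The paper supplies this input in Lemmas~\ref{lemma continuous connection} and~\ref{lemma connection holomorphic}:
\begin{itemize}
\item On $L=T^{1,0}X/E^u$, take the unique continuous $f$-invariant connection whose $(0,1)$-part is $\bar\partial_L$ and whose restriction to $E^u$ is the Bott partial connection. It is the limit of $(f^n)^*\nabla_0$.
\item The curvatures satisfy $|R_n|^2\le C\lambda^{-2(d-2)n}J_\nu(f^n)$. Hence $R_n\to0$ in $L^2(\nu)$ precisely because $d\ge3$, and the limit connection is holomorphic and flat.
\item Its parallel sections give holomorphic submersions $t$ with $\ker dt=E^u$, affine transition maps, and $t'\circ f=ct+b$. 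Restricted to stable plaques they give the linearizations $\varphi_x$.
\item Since $t$ is constant on unstable plaques, $\varphi_y\circ h^u_{xy}\circ\varphi_x^{-1}=D_xh^u_{xy}$.
\end{itemize}
The case $d=2$ is not handled this way; it is taken from Ghys's Theorem~A.

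Step~1 is likewise only a hope. Newhouse's argument relies on the order of the real line and on the gaps of a Cantor set in a one-dimensional transversal. You do not show that the affine structure or Liouville's theorem replaces this. Also, an attractor of $f^{-1}$ is saturated by stable leaves of $f$, not unstable ones.

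More importantly, transitivity is not needed at all. Once each local unstable holonomy is the restriction of the globally defined biholomorphism $\varphi_y^{-1}\circ D_xh^u_{xy}\circ\varphi_x$, you can continue the holonomy along a path $\gamma$ in $W^u_{\mathrm{loc}}(x)$ on the compact discs $\varphi_x^{-1}(\overline{B(0,R)})$. The maps $\varphi_{\gamma(t)}^{-1}\circ D_xh^u_{x\gamma(t)}\circ\varphi_x$ depend continuously on $t$, so the supremal time of definition cannot be less than $1$. Hence the holonomy is a biholomorphism between entire stable leaves.

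That is the only point where Ghys used transitivity in Theorem~B. His argument (Proposition~\ref{prop ghys}) then produces the torus and the conjugacy, and transitivity follows a posteriori. Your Step~3 is reasonable once global affine holonomy and the product structure are available. As written, though, everything upstream of it---transitivity, affine holonomy, global product structure---is asserted rather than proved.
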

The case $d=2$ is covered by \cite[Theorem A]{ghys_1995}. For $d \geq 3$, Ghys \cite[Theorem~B]{ghys_1995} proved this statement under the additional assumption of topological transitivity. The transitivity assumption is only used to show that the unstable holonomy maps are globally defined on every stable leaf. In this paper, we establish the global existence of the unstable holonomy by using a specific geometrically defined non-stationary linearization of the stable leaves, with respect to which the unstable holonomy maps are linear and thus globally defined. This enables us to remove the topological transitivity assumption from Ghys's Theorem B. 

We note that Theorem \ref{theorem topo} does not follow from the Franks--Newhouse classification of codimension-one Anosov diffeomorphisms \cite{franks,newhouse}, since in our setting 
$E^s$ and $E^u$ have real dimension at least two.
\subsubsection*{Holomorphic conjugacy}

The second step is to improve the regularity of the conjugacy. A difficulty is that a complex manifold homeomorphic to $\mathbb T^{2d}$ need not be a complex torus when $d\ge3$: real tori admit nonstandard complex structures; see \cite{catanese_2002,catanese_oguiso_peternell_2010}.\footnote{In contrast, a compact complex surface homeomorphic to $\mathbb T^4$ is a complex torus, by the classification of compact complex surfaces.} This leads to the following question.
\begin{quote}\itshape
Can a nonstandard complex structure on $\mathbb T^{2d}$, $d\ge3$, admit a holomorphic Anosov diffeomorphism?
\end{quote}
We expect the answer to be negative. Theorem~\ref{theorem rigidity} proves this in complex dimension three and, more generally, in the codimension-one case. By contrast, Xu and Zhang \cite{xu_zhang_2025} constructed holomorphic partially hyperbolic systems on real tori with nonstandard complex structures in complex dimensions at least five.

The following theorem gives a negative answer to the question above under the additional assumption that $E^s$ and $E^u$ are holomorphic. 

\begin{theorem}\label{theorem anosov torus}
Let $f\colon X\to X$ be a holomorphic Anosov diffeomorphism of a compact complex manifold. Assume that $X$ is homeomorphic to a torus and that the invariant distributions $E^s$ and $E^u$ are holomorphic. Then $X$ is biholomorphic to a complex torus, and $f$ is biholomorphically conjugate to a
holomorphic torus automorphism.
\end{theorem}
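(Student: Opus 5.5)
Since $X$ is homeomorphic to a torus, Franks--Manning gives a topological conjugacy $h\colon X\to\mathbb T^{2d}$ between $f$ and a hyperbolic toral automorphism $A$. Lift to universal covers: $\tilde h\colon \tilde X\to\R^{2d}$ conjugates a lift $\tilde f$ to the linear map $A$ and is $\Z^{2d}$-equivariant. The plan is to build a flat holomorphic structure on $X$ directly from the holomorphic splitting $TX=E^s\oplus E^u$. Then $\tilde X\cong\mathbb C^d$ with $\pi_1(X)\cong\Z^{2d}$ acting by translations, so $X$ is a complex torus. In these coordinates $f$ lifts to an affine map, and subtracting a translation turns it into a torus automorphism.

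First, I would show that each stable and unstable leaf carries a canonical flat affine structure, varying continuously transversally. This is the non-stationary linearization $\Phi_x\colon E^s(x)\to W^s(x)$, which is biholomorphic on each leaf and satisfies $f\circ\Phi_x=\Phi_{f(x)}\circ Df|_{E^s(x)}$. It is obtained as the limit of $f^{n}\circ\exp\circ Df^{-n}$, suitably normalized. Since the leaves are holomorphic and $Df$ is complex linear, the linearizations are holomorphic and unique among those tangent to the identity with this equivariance. The same construction applies to unstable leaves. Because $E^s$ and $E^u$ are holomorphic subbundles, the foliations $W^s$ and $W^u$ are holomorphic foliations. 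Their holonomies between local transversals are therefore holomorphic, and by uniqueness of the linearization they are affine in these charts.

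Next, I would produce a holomorphic connection. Define $\nabla$ on $TX$ by declaring the leafwise-linearization vector fields parallel: for $v\in E^s$, extend $v$ along $W^s$ as the constant field in the affine chart, and transport it along $W^u$ by unstable holonomy; do the same with the roles of $s$ and $u$ swapped. Affineness of the holonomies makes this a well-defined flat connection with vanishing torsion. This is the standard Benoist--Labourie / Kanai-type connection, here holomorphic because everything is holomorphic and $f$-invariant. An alternative route is to note that invariance under $f$ forces any $f$-invariant tensor built from the splitting to be $C^\infty$ by the usual bootstrap. Then $\nabla$ is an $f$-invariant torsion-free flat holomorphic connection. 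Its developing map $D\colon\tilde X\to\mathbb C^d$ is a local biholomorphism equivariant with respect to a holonomy $\rho\colon\Z^{2d}\to\mathrm{Aff}(\mathbb C^d)$.

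The main obstacle is showing that $D$ is a biholomorphism and that $\rho$ consists of translations. For completeness, I would use $\tilde h$: in affine charts, stable and unstable leaves are developed onto affine subspaces, and the global product structure of $\tilde X$ transported by $\tilde h$ shows that $D$ is injective and onto $\mathbb C^d$. Hence $X=\mathbb C^d/\rho(\Z^{2d})$ is a complete affine manifold with abelian fundamental group of rank $2d$. Its linear holonomy commutes with the linear part $L$ of the lift of $f$, which is hyperbolic and satisfies $L\rho(\gamma)L^{-1}=\rho(A\gamma)$. The linear parts of $\rho$ then form a representation intertwined with the hyperbolic $A$. Since $\Z^{2d}$ is compact modulo nothing, their eigenvalues must be roots of unity, and unipotent parts are excluded by the expansion/contraction rates. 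After passing to a finite cover, the holonomy is by translations. Compactness then forces those translations to form a lattice, and Bieberbach-type reasoning removes the finite cover, since $\pi_1$ is already $\Z^{2d}$ acting freely. This gives $X\cong\mathbb C^d/\Lambda$, and $f$ becomes affine, hence conjugate by a translation to a torus automorphism.
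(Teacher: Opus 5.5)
\textbf{Gap 1: the leafwise linearizations.} Your first step does not hold at the generality of Theorem~\ref{theorem anosov torus}. You need holomorphic non-stationary linearizations $\Phi_x\colon E^s(x)\to W^s(x)$, and their unstable analogues, to exist and to be unique. The theorem places no restriction on $\dim_{\mathbb C}E^s$ or $\dim_{\mathbb C}E^u$. Even in the application to Theorem~\ref{theorem rigidity}, the unstable leaves have dimension $d-1\ge2$.

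The limit $f^{-n}\circ\exp\circ Df^{n}$ (or its unstable version) is only known to converge under a bunching hypothesis, roughly $\|Df^n|_{E^s}\|^2\ll m(Df^n|_{E^s})$. Without it, the non-stationary normal form is a sub-resonance polynomial, not a linear map, and holomorphicity does not help. For example, $(z,w)\mapsto(\lambda^2z+w^2,\lambda w)$ is a holomorphic contraction that is not even formally linearizable.

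Uniqueness also fails: $(a,b)\mapsto(a+b^2/2,b)$ is tangent to the identity and commutes with $\operatorname{diag}(\lambda^2,\lambda)$. So in resonant cases there is no canonical leafwise affine structure. Your claim that ``holonomies are affine by uniqueness'' then has no basis, and the flat torsion-free connection built from these structures is not defined.

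The Kanai-type construction you cite obtains the leafwise parts of its connection from an invariant symplectic form pairing $E^s$ with $E^u$, and you have no such form here. For comparison, the paper builds an invariant connection only on the line bundle $T^{1,0}X/E^u$ (Section~\ref{tt:section}). There endomorphisms are scalars, so no bunching is needed.

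\textbf{Gap 2: the holonomy step.} Your final step is also unsupported. Nothing forces the eigenvalues of the linear holonomy to be roots of unity. Unipotent parts are not excluded by the expansion and contraction rates either. The holomorphic affine maps $T_{a,b}(x,y)=(x+a+by+b^2/2,\,y+b)$ form an abelian group acting simply transitively on $\mathbb C^2$. They satisfy $\operatorname{diag}(\lambda^2,\lambda)\,T_{a,b}\,\operatorname{diag}(\lambda^2,\lambda)^{-1}=T_{\lambda^2a,\lambda b}$, which is fully compatible with a contraction.

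\textbf{How the paper avoids this.} The paper uses no linearization at all.
\begin{enumerate}
\item Because both foliations are holomorphic, the global product map $B\colon S\times U\to\wt X$ is biholomorphic.
\item Each deck transformation $\Phi_\gamma=H^{-1}L_\gamma H$ therefore splits as $(\rho^s_\gamma,\rho^u_\gamma)$.
\item Consequently $\Phi_{p_s\gamma}$, which acts as $(\rho^s_\gamma,\id)$, is biholomorphic.
\item Since $p_s(\Gamma)$ is dense in $V^s$ and $p_u(\Gamma)$ is dense in $V^u$, locally uniform limits make every $\Phi_v$, $v\in V$, biholomorphic.
\item Bochner--Montgomery then makes $H$ a diffeomorphism, and $H_*J$ is translation-invariant.
\end{enumerate}
If you want to keep an affine viewpoint, the leafwise affine structures must come from this transitive action rather than from a linearization.
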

The main idea is to decompose deck transformations of the universal cover into their stable and unstable components and use the holomorphicity of both foliations to show that these components are biholomorphic. Their closure gives a transitive action by holomorphic translations, forcing the complex structure to be translation-invariant.

To apply Theorem \ref{theorem anosov torus}, we establish the holomorphicity of the invariant distributions. 

\begin{theorem}\label{theorem distribution}
Let $f\colon X\to X$ be a holomorphic Anosov diffeomorphism of a compact complex manifold with $\dim_\mathbb C E^s=1$. Then both $E^s$ and $E^u$ are holomorphic subbundles of $T^{1,0}X$.
\end{theorem}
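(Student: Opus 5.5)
Throughout, $\dim_\mathbb C E^s=1$. The plan is to treat the two bundles separately, since different mechanisms control them.

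\textbf{Stable bundle.} The stable distribution $E^s$ is complex one-dimensional, so $Df|_{E^s}$ is conformal: each stable leaf is a Riemann surface and $f$ acts between leaves by holomorphic maps. Hence $E^s$ is a complex line field, and I would first show it is holomorphic transversally. By Theorem~\ref{theorem topo}, or rather by the structure established in its proof, each stable leaf carries a non-stationary linearization $\Phi_x\colon W^s(x)\to E^s(x)\cong\mathbb C$. This linearization is holomorphic along leaves, intertwines $f$ with the linear maps $Df|_{E^s}$, and makes the unstable holonomies linear.

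\textbf{Unstable bundle.} For $E^u$, the approach I would try first is the one already used in the codimension-one setting by Ghys and Cantat. Since $E^s$ is one-dimensional, $E^u$ has complex codimension one, so it is the kernel of a $(1,0)$-form $\omega$ with values in the line bundle $L=T^{1,0}X/E^u\cong E^s$. The form is well defined and continuous, and it is $f$-invariant up to the cocycle $Df|_{E^s}$. Its holomorphicity amounts to showing that $\bar\partial$ applied to a local defining form vanishes, in the weak sense. The standard trick is to run a graph-transform argument in the space of continuous $(1,0)$-line fields. Invariant complex hyperplane fields are obtained as the limit of pullbacks $(Df^{-n})^*$ of a smooth holomorphic hyperplane field chosen locally. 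Each pullback of a holomorphic object by a holomorphic map is holomorphic. Locally uniform convergence of holomorphic sections (via Cauchy estimates, or Montel) then gives holomorphy of the limit, provided the convergence is uniform on a fixed neighborhood.

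In coordinates, I would express $E^u$ near a point as the graph of a map $A(x)\colon\mathbb C^{d-1}\to\mathbb C$ and the dynamics as a holomorphic fiber-contraction on the bundle of graphs, namely $A\mapsto (Df)^{-1}_*A$. Contraction holds because dominated splitting gives a contraction rate of roughly $\|Df|_{E^s}\|\cdot\|(Df|_{E^u})^{-1}\|<1$. The key point is that the space of holomorphic sections on a complex neighborhood is closed under uniform limits. So I would extend the invariant section theorem to a small complexified neighborhood and obtain a holomorphic invariant section.

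\textbf{The symmetric argument for $E^s$.} For $E^s$ the same graph-transform argument works forward in time with $Df$, using linear maps $\mathbb C\to\mathbb C^{d-1}$. The difficulty is that such graph transforms are only defined along orbits, not on open sets. The local holomorphic extension requires a holomorphic fiber contraction over a fixed open set, and $f$ does not map neighborhoods into themselves.

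\textbf{Main obstacle.} Bridging this gap is the main obstacle. What I would try is to use bunching. In complex dimension one along $E^s$ the action is conformal, so the relevant bunching condition compares $\|Df|_{E^s}\|$ with $\|Df|_{E^u}\|$ and holds automatically. This yields $C^1$, indeed $C^{1+\alpha}$, regularity of $E^u$ and of $E^s$. Holomorphy then follows from the invariance of the Dolbeault obstruction:
\begin{itemize}
\item the obstruction $\bar\partial$ of the section is an $f$-equivariant tensor,
\item its norm must satisfy $\|T\|(f^n x)\ge c\lambda^{-n}\|T\|(x)$ with $\lambda<1$,
\item compactness and boundedness then force $T=0$.
\end{itemize}
Carrying this out for both bundles gives the theorem.
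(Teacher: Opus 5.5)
\textbf{The $E^s$ half has a genuine gap at the bunching step.} Conformality of $Df|_{E^s}$ controls the regularity of $E^u$, not of $E^s$. The field $x\mapsto E^s_x$ is invariant under $Df^{-1}$ over the base map $f^{-1}$. To apply the $C^1$ section theorem to it you need, for some iterate and uniformly in $x$, that $|S_n(x)|\,\|U_n(x)\|\,m(U_n(x))^{-1}<1$. This compares the stable contraction with the \emph{non-conformality} of the $(d-1)$-dimensional unstable cocycle, and nothing you have established forces it when $d\ge3$.

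The same quantity is all your obstruction argument yields. Let $\beta^s$ be the $\bar\partial$-second fundamental form of $E^s$, with values in $T^{1,0}X/E^s\cong E^u$. For $v\in E^u_x$ and $w\in E^s_x$, equivariance gives only $|\beta^s_x(\bar v,w)|\le C\|\beta^s\|_{C^0}\,|S_n(x)|\,\|U_n(x)\|\,m(U_n(x))^{-1}|v||w|$, and no pointwise decay is available. Since $|\det U_n|\ge\|U_n\|\,m(U_n)^{d-2}$, decay would follow from a uniform bound on $|\det_{\mathbb C}D_xf^n|$, i.e.\ from invariance of a smooth volume. That is not known a priori. So neither the $C^1$ regularity of $E^s$ nor the vanishing of its obstruction is justified.

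The missing idea, which is the paper's, is to trade the pointwise bound for an integrated one. The paper uses Theorem~\ref{theorem topo} to build a closed holomorphic $1$-form $\theta$ with $\ker\theta=E^u$ and $f^*\theta=\mu\theta$. It takes the continuous section $e$ of $E^s$ with $\theta(e)=1$ and approximates it uniformly by the smooth fields $e_n=\mu^n(Df^n)^{-1}(e_0\circ f^n)$. Then $\bar\partial e_n$ takes values in $E^u$ and $|\bar\partial e_n|^2\le C|\mu|^{2n}\|U_n\|^2m(U_n)^{-2}\le C\lambda^{-2n}J_\nu(f^n)$. The identity $\int_XJ_\nu(f^n)\,d\nu=\nu(X)$ therefore gives $\bar\partial e_n\to0$ in $L^2$. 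A distributional-limit argument then shows $e$ is holomorphic, with no a priori differentiability of $E^s$ needed.

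\textbf{For $E^u$, your first idea does not work.} You propose pulling back locally chosen holomorphic hyperplane fields and taking locally uniform limits. But the pullback by $Df^n$ of a field near $x$ needs the field on $f^n$ of a neighbourhood of $x$, which for large $n$ is stretched across $X$ along unstable leaves. So you would need a global holomorphic hyperplane field to begin with. Starting from a smooth global field gives only continuous limits with uncontrolled $\bar\partial$. Passing to a ``complexified neighbourhood'' does not help, since $X$ is already complex and the issue is holomorphy in the base point.

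Your bunching route does work for $E^u$. Conformality of $Df|_{E^s}$ makes $E^u$ of class $C^1$. Iterating its $\bar\partial$-second fundamental form $\beta^u$ \emph{backwards} gives $|\beta^u_x|\le C\lambda^{-n}\|\beta^u\|_{C^0}$, because the factor $|S_n(f^{-n}x)|$ from the quotient $T^{1,0}X/E^u\cong E^s$ cancels the expansion $|S_n(f^{-n}x)|^{-1}$ of $Df^{-n}$ on $E^s_x$. This is a legitimate alternative to Ghys's quasiconformal argument, which the paper simply cites for $E^u$.
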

The invariant distributions $E^s$ and $E^u$ are a priori H\"older continuous \cite{hirsch_pugh_shub_1977}. In the setting of Theorem \ref{theorem distribution}, Ghys \cite[Proposition~2.2]{ghys_1995} proved that $E^u$ is holomorphic by estimating the quasiconformal distortion of the unstable holonomy maps. This method does not extend directly to $E^s$ since holomorphicity and conformality are no longer equivalent in higher complex dimensions. As an alternative, we prove the holomorphicity of $E^s$ directly by an $L^2$-estimate for the antiholomorphic derivatives of suitable smooth approximations.

\subsection{Structure of the paper}
Section~\ref{tt:section} proves Theorem~\ref{theorem topo}, Section~\ref{sec torus Anosov} proves Theorem~\ref{theorem anosov torus}, and Section~\ref{sec:distribution} proves Theorem~\ref{theorem distribution}.  Theorem~\ref{theorem rigidity} follows from Theorems~\ref{theorem topo}, \ref{theorem anosov torus}, and~\ref{theorem distribution}. 

\section{Proof of Theorem~\ref{theorem topo}}
\label{tt:section}
Write $d=\dim_\mathbb C X$. The case $d=2$ follows from \cite[Theorem~A]{ghys_1995}, so throughout this section we assume $d\ge3$. By \cite[Proposition~2.2]{ghys_1995}, applied to $f^{-1}$, the unstable bundle $E^u$ is holomorphic. Thus
\[
    L:=T^{1,0}X/E^u
\]
is a holomorphic line bundle. Denote the quotient map by $\pi\colon T^{1,0}X\to L$. We can also regard $\pi$ as an $L$-valued $(1,0)$-form. Set
\[
    S_n(x)=D_xf^n|_{E^s_x},\qquad
    U_n(x)=D_xf^n|_{E^u_x},\qquad
    Q_n(x)\colon L_x\longrightarrow L_{f^nx},
\]
where $Q_n$ is the quotient map induced by $Df^n$ with
\[
    \pi_{f^nx}\circ D_xf^n=Q_n(x)\circ\pi_x.
\]
Fix a smooth Hermitian metric on $T^{1,0}X$ and equip $E^u$ and $L$ with the induced and quotient metrics. Uniform hyperbolicity gives constants $C>0$, $0<\tau<1$, and $\lambda>1$ such that, for every $x\in X$ and $n\ge1$,
\begin{equation}\label{tt:bounds}
    \begin{gathered}
    \|S_n(x)\|\le C\tau^n,\qquad
    \|Q_n(x)\|\le C\tau^n,\\
    m(U_n(x))\ge C^{-1}\lambda^n,\qquad
    \|D_xf^n\|\le C\|U_n(x)\|.
    \end{gathered}
\end{equation}
Here $m(B)=\|B^{-1}\|^{-1}$. Constants denoted by $C$ may change from one occurrence to the next.

\subsection{A compatible invariant connection}
The involutivity of $E^u$ gives a canonical way to differentiate sections of $L=T^{1,0}X/E^u$ along $E^u$. For a smooth $(1,0)$ vector field $Y$ tangent to $E^u$ and a smooth section $s$ of $L$, the \emph{Bott partial connection} is given by 
\begin{equation}\label{nt:bott}
    \nabla^{\mathrm B}_Y s:=\pi[Y,\widetilde s],
    \qquad \pi(\widetilde s)=s,
\end{equation}
where $\widetilde s$ is any smooth local lift of $s$ to $T^{1,0}X$. The expression is independent of the lift $\widetilde s$: two lifts differ by a section $Z$ of $E^u$, and $\pi[Y,Z]=0$ because $E^u$ is involutive. 
\begin{definition}[Compatible connection]\label{nt:compatible}
A complex connection $\nabla$ on $L$ is called \emph{compatible} if its $(0,1)$-part is $\bar\partial_L$ and its restriction to $E^u$ is $\nabla^{\mathrm B}$.
\end{definition}
In local holomorphic foliation coordinates $(z,w_1,\ldots,w_{d-1})$ with $E^u=\ker dz$, set $e=\pi(\partial_z)$. A connection is determined in this frame by a complex-valued one-form $A$ through
\begin{equation}\label{eq connect}
      \nabla e=A\otimes e,\qquad
    \nabla(ue)=(du+uA)\otimes e.
\end{equation}
Compatibility is equivalent to
\begin{equation}\label{eq connect2}
    A=a\,dz
\end{equation}
for a scalar function $a$. Indeed, $\nabla^{0,1}=\bar\partial_L$ excludes the $(0,1)$-part of $A$ as $e$ is holomorphic, and $\nabla^{\mathrm B}_{\partial_{w_j}}e=\pi[\partial_{w_j},\partial_z]=0$ excludes the $dw_j$-components. The local connections with $A=0$ are therefore compatible. A smooth partition of unity combines them into a global smooth compatible connection $\nabla_0$. We will use this local expression several times during the proof.

The bundle map $Q_1\colon L\to L$ covering $f$ identifies a pulled-back connection with a connection on $L$. More explicitly, put
\[
 (f_\#s)(y)=Q_1(f^{-1}y)s(f^{-1}y).
\]
For a smooth vector field $V$ and section $s$, our convention is
\begin{equation}\label{nt:pullback_connection}
 ((f^*\nabla)_V s)(x)
 =Q_1(x)^{-1}\bigl(\nabla_{f_*V}(f_\#s)\bigr)(fx).
\end{equation}
In particular, the difference of two connections pulls back as an ordinary scalar one-form, since conjugation acts trivially on $\operatorname{End}(L)$.

\begin{lemma}\label{lemma continuous connection}
The line bundle $L$ admits a unique continuous compatible $f$-invariant connection $\nabla$.
\end{lemma}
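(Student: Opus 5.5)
Fix the smooth compatible connection $\nabla_0$ built above. Every compatible connection has the form $\nabla=\nabla_0+\beta$, where $\beta$ is a scalar one-form. In foliation coordinates $\beta=b\,dz$, by \eqref{eq connect2}. Intrinsically, $\beta$ is a section of $L^*$, viewed as the $(1,0)$-forms vanishing on $E^u$ (an $L^*$-valued form via $\pi$).

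\emph{Step 1: invariance of compatibility.} Because $f$ is holomorphic, $Q_1$ is a holomorphic bundle map. Because $f$ preserves $E^u$, $f_*$ intertwines Lie brackets and hence the Bott partial connection. So $f^*\nabla$ is again compatible whenever $\nabla$ is. Consequently, by the remark after \eqref{nt:pullback_connection}, $f^*\nabla_0=\nabla_0+\eta$ for a smooth one-form $\eta$ that again annihilates $E^u$ and has no $(0,1)$-part, i.e.\ $\eta$ is a smooth section of $L^*$. The invariance equation $f^*(\nabla_0+\beta)=\nabla_0+\beta$ becomes
\[
\beta=\eta+f^*\beta=\eta+T\beta,\qquad (T\beta)_x=\beta_{fx}\circ D_xf .
\]
On sections of $L^*$ this reads $(T\beta)_x=\beta_{fx}\circ Q_1(x)$.

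\emph{Step 2: contraction.} By \eqref{tt:bounds}, $\|Q_n(x)\|\le C\tau^n$, so $\|T^n\beta\|_{C^0}\le C\tau^n\|\beta\|_{C^0}$ on continuous sections of $L^*$. The series
\[
\beta=\sum_{n\ge0}T^n\eta=\sum_{n\ge0}(f^n)^*\eta
\]
therefore converges uniformly to a continuous section of $L^*$ solving the equation. Then $\nabla=\nabla_0+\beta$ is continuous and $f$-invariant. It is also compatible: its $(0,1)$-part is $\bar\partial_L$ since $\beta$ is of type $(1,0)$, and $\beta$ vanishes on $E^u$, so the restriction to $E^u$ is $\nabla^{\mathrm B}$.

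\emph{Step 3: uniqueness.} Two continuous compatible invariant connections differ by a continuous section $\gamma$ of $L^*$ with $T\gamma=\gamma$. Iterating gives $\|\gamma\|=\|T^n\gamma\|\le C\tau^n\|\gamma\|\to0$, so $\gamma=0$.

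The main obstacle is Step 1, specifically verifying that $\eta=f^*\nabla_0-\nabla_0$ really lies in $L^*$, i.e.\ that compatibility is preserved under pullback. With the conventions in \eqref{nt:pullback_connection} this needs a short local computation: in foliation coordinates $f$ has the form $z\mapsto g(z)$, $w\mapsto h(z,w)$, and the frame $e$ transforms by the holomorphic factor $g'$. Pulling back $A=a\,dz$ then gives $(a\circ f)\,g'\,dz+d\log g'$, and $d\log g'=(g''/g')\,dz$ is again a multiple of $dz$. A related subtlety is that "continuous connection" should be understood as a smooth connection plus a continuous one-form, which is exactly the form produced here.
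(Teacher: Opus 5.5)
Your proposal is correct and is essentially the paper's argument: your Neumann series $\beta=\sum_{n\ge0}(f^n)^*\eta$ is exactly the telescoping limit $\nabla=\lim_n (f^n)^*\nabla_0=\nabla_0+\sum_{n\ge0}(f^n)^*\alpha$ used there. The contraction estimate is the same (you phrase it via $\|Q_n\|$ on $L^*$, the paper via $\|S_n\|$), the uniqueness argument is identical, and your local computation $(a\circ f)g'\,dz+(g''/g')\,dz$ usefully verifies the step the paper only asserts, namely that pullback preserves compatibility.
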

\begin{proof}
Define
\[
    \nabla_n=(f^n)^*\nabla_0,\qquad
    \alpha=f^*\nabla_0-\nabla_0.
\]
The Leibniz rule shows that $\alpha$ is $C^\infty$-linear in both the vector field and the section. Thus $\alpha$ is an $\operatorname{End}(L)$-valued one-form. Since $L$ is a complex line bundle, we have the canonical identifications 
\[
    \operatorname{End}(L)\cong L\otimes L^*\cong X\times\mathbb C.
\]
We therefore regard $\alpha$ as an ordinary complex-valued one-form.

Since $f$ is holomorphic and preserves $E^u$, pullback preserves both $\bar\partial_L$ and the Bott partial connection. Consequently, each $\nabla_n$ is compatible, and
\[
    \alpha\in\Omega^{1,0}(X),\qquad \alpha|_{E^u}=0.
\]
The form $\alpha$ is smooth but need not be holomorphic. 

For $v\in T^{1,0}_xX$, write $v=v_s+v_u$ according to the invariant splitting. Then
\[
    \bigl((f^n)^*\alpha\bigr)_x(v)
    =\alpha_{f^nx}(S_n(x)v_s).
\]
The projections onto $E^s$ and $E^u$ are uniformly bounded, so \eqref{tt:bounds} implies
\begin{equation}\label{tt:connection_convergence}
    \|\nabla_{n+1}-\nabla_n\|_{C^0}
    =\|(f^n)^*\alpha\|_{C^0}
    \le C\tau^n\|\alpha\|_{C^0}.
\end{equation}
The summability of the right-hand side shows that $\nabla_n$ converges uniformly to a continuous compatible connection $\nabla$. Passing to the limit in $f^*\nabla_n=\nabla_{n+1}$ gives $f^*\nabla=\nabla$.

If $\nabla'$ is another continuous compatible invariant connection, then $\beta=\nabla'-\nabla$ is a scalar $(1,0)$-form vanishing on $E^u$. The same calculation gives
\[
    \beta=(f^n)^*\beta,\qquad
    \|\beta\|_{C^0}\le C\tau^n\|\beta\|_{C^0}.
\]
Letting $n\to\infty$ yields $\beta=0$.
\end{proof}

\begin{lemma}\label{lemma connection holomorphic}
The connection $\nabla$ is holomorphic and flat.
\end{lemma}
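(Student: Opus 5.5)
Work in local holomorphic foliation coordinates $(z,w_1,\dots,w_{d-1})$ with $E^u=\ker dz$, where by \eqref{eq connect2} the invariant connection $\nabla$ is given by $A=a\,dz$ for a continuous function $a$. Holomorphicity of $\nabla$ means $a$ is holomorphic. Flatness means $dA=0$, i.e.\ $\partial_{\bar z}a=\partial_{\bar w_j}a=\partial_{w_j}a=0$. If $a$ is holomorphic, flatness reduces to $\partial_{w_j}a=0$. So the plan is to show, in the sense of distributions, that the curvature $F=dA$ vanishes identically; this gives both claims at once.

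First I verify that $F$ is a well-defined, $f$-invariant object. For the smooth connections $\nabla_n$ the curvatures $F_n=(f^n)^*F_0$ are global smooth scalar $2$-forms, since the curvature of a line bundle connection is scalar. They pull back naturally: $F(f^*\nabla)=f^*F(\nabla)$, which follows from \eqref{nt:pullback_connection}. Each $\nabla_n$ is compatible, so in foliation coordinates $A_n=a_n\,dz$. Hence
\[
F_n=da_n\wedge dz=\sum_j\partial_{w_j}a_n\,dw_j\wedge dz+\partial_{\bar z}a_n\,d\bar z\wedge dz+\sum_j\partial_{\bar w_j}a_n\,d\bar w_j\wedge dz .
\]
Every term contains $dz$. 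So $F_n(V,W)=0$ whenever $V,W$ both lie in $E^u\oplus\overline{E^u}$: $F_n$ vanishes on pairs of unstable-type vectors.

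Next, since $\nabla_n\to\nabla$ uniformly, $A_n\to A$ in $C^0$, and $F_n=dA_n\to dA=:F$ as currents. Invariance gives $F=(f^n)^*F$ as currents. I then evaluate the pullback structure. Write a vector as $v=v_s+v_u$, including conjugates, using the real splitting $TX=E^s_{\R}\oplus E^u_{\R}$. Since $F$ kills pairs from $E^u_{\R}$, only components with at least one stable slot survive. Pulling back by $f^n$ multiplies the stable slot by $\|S_n\|\le C\tau^n$ and the other slot by at most $\|Df^n\|\le C\|U_n\|$.

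This is where a naive argument fails: the product $\tau^n\|U_n\|$ need not tend to zero. The decay is only clean for the $E^s\wedge\overline{E^s}$ component, which is the $d\bar z\wedge dz$ term. That component carries the factor $|S_n|^2\le C\tau^{2n}$, so it vanishes by invariance. However, this only gives a distributional statement about a limit current, so I would instead work with the smooth approximants. I would use $F_{N+n}=(f^n)^*F_N$ together with the $C^0$ convergence of $A_N$ to bound pairings against test forms.

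The main obstacle is the mixed terms $\partial_{w_j}a$ and $\partial_{\bar w_j}a$, which pair a stable direction with an unstable one. For these I would exploit the dimension count $\dim_{\mathbb C}E^s=1$ and integrate by parts. The mixed component of $F$ equals the derivative of the continuous function $a$ along unstable leaves. Invariance of $\nabla$ yields the cocycle relation
\[
a(x)=\lambda_1(x)\,a(fx)+\partial_z\log\lambda_1(x),
\]
where $\lambda_1$ is the holomorphic derivative of $f$ on $L$ in the $z$-coordinate. Iterating this relation, $a$ is expressed as a uniformly convergent series
\[
a=\sum_n\bigl(\text{product of }Q\text{'s}\bigr)\cdot\partial_z\log\lambda_1\circ f^n .
\]
The terms are built from functions that are holomorphic, with $Q_n$ of size $\tau^n$. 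The term $\partial_z\log\lambda_1\circ f^n$ has its $w$-derivatives amplified by $\|U_n\|$. The hardest step is controlling this amplification against the $\tau^n$ decay of the $Q$ factors.

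To handle it, I would use holomorphicity and the $C^0$-summable series. Uniform limits of holomorphic functions are holomorphic, and each partial sum is holomorphic in all variables, since $f$ and $E^u$ are holomorphic. This gives the holomorphicity of $a$ without needing the derivative bounds. Then $\bar\partial$ of $A$ vanishes, and the curvature is a holomorphic $2$-form $\partial_{w_j}a\,dw_j\wedge dz$. This form is $f$-invariant and vanishes on $E^u\wedge E^u$. By Cauchy estimates, $\|(f^n)^*F\|_{C^0}$ can be compared on a fixed compact set. I would then show that a continuous invariant form of type $E^s\otimes E^{u*}$ is zero. This uses the gap $\tau^n$ versus the growth of $U_n$: apply the argument to $f^{-n}$, where the unstable slot contracts by $\lambda^{-n}$ and the stable slot expands only by the factor accounted for in the bound \eqref{tt:bounds}. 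If this gap estimate fails, I would fall back on showing that $F$ descends to a holomorphic section of $E^{u*}\otimes L^*\otimes L$ invariant under $f$, and then apply the uniqueness argument of Lemma~\ref{lemma continuous connection} to it.
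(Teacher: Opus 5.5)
There is a genuine gap, and it sits exactly where you claim holomorphicity ``without needing the derivative bounds.''

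\textbf{The holomorphicity step.} Your cocycle relation $a(x)=\lambda_1(x)a(fx)+\partial_z\log\lambda_1(x)$ depends on the charts chosen at $x$ and $fx$. Iterating it $n$ times requires $y,fy,\dots,f^ny$ to stay in the charts chosen along the orbit of $x$, that is, $y\in\bigcap_{k\le n}f^{-k}(U_{i_k})$. In the unstable directions this set has extent at most of order $m(U_n)^{-1}\le C\lambda^{-n}$. So the $n$-th partial sum is holomorphic only on a set collapsing onto the local stable plaque of $x$, never on a fixed open set, and ``uniform limits of holomorphic functions are holomorphic'' cannot be invoked.

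If you fix a neighborhood $W$ instead, $f^k(W)$ is stretched along $E^u$ by up to $\|U_k\|$ and meets many charts. You then have two options, and both fail:
\begin{enumerate}
\item You switch charts inside $f^k(W)$. The partial sums become discontinuous, and smoothing them with cutoffs costs unstable derivatives of size $\|U_k\|$ against chart discrepancies of size $\|Q_k\|$. This is precisely the non-decaying product $\tau^k\|U_k\|$ you flagged earlier.
\item You extend the transverse coordinate along the long unstable plaques by holonomy. Then the tail no longer tends to zero, because it measures the nonlinearity of long unstable holonomy, which is what one is trying to prove vanishes.
\end{enumerate}
A warning sign: each term of your series depends on $z$ alone. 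If the argument worked, it would give flatness for free in every dimension, whereas the paper stresses that $d\ge3$ is essential.

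\textbf{The flatness step.} This step also lacks a decay mechanism. For an invariant section of $E^{u*}\otimes L^*$, pulling back by $f^n$ multiplies by $\|U_n\|\,\|Q_n\|$. Pulling back by $f^{-n}$ multiplies by $\bigl(m(U_n)\,\|Q_n\|\bigr)^{-1}$ evaluated at $f^{-n}x$. Neither factor is known to tend to zero pointwise; that would be a bunching condition. The uniqueness argument of Lemma~\ref{lemma continuous connection} does not transfer either, since it works only for forms vanishing on $E^u$, where $S_n$ alone enters.

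\textbf{The missing idea.} You need to beat the bad product on average rather than pointwise. Two facts give this:
\begin{itemize}
\item $J_\nu(f^n)=\|Q_n\|^2|\det_{\mathbb C}U_n|^2$;
\item $|\det_{\mathbb C}U_n|\ge\|U_n\|\,m(U_n)^{d-2}$.
\end{itemize}
Together they yield $|R_n|^2\le C\|Q_n\|^2\|U_n\|^2\le C\lambda^{-2(d-2)n}J_\nu(f^n)$. Since $\int_XJ_\nu(f^n)\,d\nu=\nu(X)$, this gives $R_n\to0$ in $L^2$ when $d\ge3$. Hence $dA=0$ as a distribution. Because $A=a\,dz$ has type $(1,0)$, the $(1,1)$-part $\bar\partial a\wedge dz$ of $dA$ must vanish, so $\bar\partial a=0$. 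Holomorphicity and flatness then come out together, as your opening plan intended.
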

\begin{proof}
As shown in \eqref{eq connect} and \eqref{eq connect2}, in local holomorphic foliation coordinates $(z,w_1,\ldots,w_{d-1})$ with $E^u=\ker dz$ and $e=\pi(\partial_z)$, the compatible connections $\nabla_n$ and $\nabla$ have connection one-forms of the following type: 
\begin{equation}\label{eq coordinate nabla}
    \nabla_ne=A_n\otimes e,\qquad A_n=a_n\,dz,
    \qquad \nabla e=A\otimes e,\quad A=a\,dz.
\end{equation}
Therefore, the curvature of the connection $\nabla_n$ is  given by
\[R_n=d A_n+A_n\wedge A_n=dA_n=d a_n\wedge dz.\]
Naturality of curvature gives $R_n=(f^n)^*R_0$; the usual conjugation by $Q_n$ disappears because $L$ has rank one. In a foliation chart around $f^nx$, write $R_0=da_0\wedge dz$. On its inverse image,
\[
    R_n=(f^n)^*(da_0)\wedge(f^n)^*(dz).
\]
Since $dz$ annihilates $E^u$, it factors through the quotient $L$, and therefore
\[
    |(f^n)^*dz|_x\le C\|Q_n(x)\|,
    \qquad |(f^n)^*da_0|_x\le C\|D_xf^n\|.
\]

Fix a finite cover $X=\bigcup_{i=1}^N V_i$, where each $V_i$ is relatively compact in a holomorphic foliation chart $U_i$.
Writing $R_0=da_{0,i}\wedge dz_i$ on $U_i$, the one-forms $da_{0,i}$ and the quotient covectors induced by $dz_i$ are uniformly bounded on the sets $V_i$.
For each $x$ and $n$, choose $i$ such that $f^nx\in V_i$ and apply the preceding estimates in this chart.
Thus we can choose a uniform constant $C$, independent of both $x$ and $n$, such that
\begin{equation}\label{nt:curvature_pointwise}
    |R_n(x)|^2
    \le C\|Q_n(x)\|^2\|D_xf^n\|^2
    \le C\|Q_n(x)\|^2\|U_n(x)\|^2.
\end{equation}

Let $\nu$ be the real volume form of the chosen Hermitian metric. In unitary frames adapted to $E^u\oplus(E^u)^\perp$, the derivative has the block form
\[
    D_xf^n=
    \begin{pmatrix}U_n(x)&b_n(x)\\0&q_n(x)\end{pmatrix},
\]
where $|q_n|=\|Q_n\|$. For any complex-linear map $B$, its real determinant is $\det_{\mathbb R}B=|\det_{\mathbb C}B|^2$.  Thus
\begin{equation}\label{nt:quotient_jacobian}
    J_\nu(f^n)(x)
    =\|Q_n(x)\|^2|\det_{\mathbb C}U_n(x)|^2.
\end{equation}
If $\sigma_1\ge\cdots\ge\sigma_{d-1}>0$ are the singular values of $U_n(x)$, then
\[
    |\det_{\mathbb C}U_n(x)|
    =\prod_{j=1}^{d-1}\sigma_j
    \ge\|U_n(x)\|\,m(U_n(x))^{d-2}.
\]
Combining this with \eqref{tt:bounds}, \eqref{nt:curvature_pointwise}, and \eqref{nt:quotient_jacobian} gives
\begin{equation}\label{nt:curvature_jacobian}
    |R_n(x)|^2
    \le C\,m(U_n(x))^{-2(d-2)}J_\nu(f^n)(x)
    \le C\lambda^{-2(d-2)n}J_\nu(f^n)(x).
\end{equation}
Integrating this inequality gives
\begin{equation}\label{nt:curvature_l2}
    \begin{aligned}
    \|R_n\|_{L^2(\nu)}^2
    &\le C\lambda^{-2(d-2)n}\int_XJ_\nu(f^n)\,d\nu\\
    &=C\lambda^{-2(d-2)n}\nu(X)\longrightarrow0.
    \end{aligned}
\end{equation}
Here $d\ge3$ is essential. The volume form $\nu$ need not be $f$-invariant.

On each fixed foliation chart, $A_n\to A$ locally uniformly and $dA_n=R_n\to0$ in $L^2$. Both convergences hold in the sense of distributions, and continuity of distributional differentiation gives $dA=0$.

Since $A=a\,dz$ has type $(1,0)$, the components $\partial A$ and $\bar\partial A$ have distinct types and vanish separately. Thus $(\bar\partial a)\wedge dz=0$, which implies $\bar\partial a=0$ coefficientwise. The distributional Cauchy--Riemann theorem \cite[Chapter~I, Lemma~(3.29)(a)]{demailly_2012} shows that $a$ is holomorphic. Hence $\nabla$ is holomorphic and its curvature $dA$ vanishes.
\end{proof}

\subsection{Affine unstable holonomy}
Let $\nabla$ be the flat holomorphic connection constructed above. We will use this connection to define a normal form coordinate along $W^s$ for which the unstable holonomy is linear. 
\begin{lemma}\label{lemma transverse coordinates}
For every $x \in X$, there is a neighborhood $U$, a nowhere-vanishing $\nabla$-parallel holomorphic section $\sigma$ of $L|_U$, and a holomorphic submersion $t\colon U\to\mathbb C$ such that
\[
 \pi=dt\otimes\sigma,\qquad \ker dt=E^u.
\]
\end{lemma}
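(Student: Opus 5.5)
We work in a holomorphic foliation chart around $x$ and build the data in three steps. First we choose $\sigma$, then observe that $\pi$ becomes a scalar one-form in the frame $\sigma$, and finally show that this one-form is closed and holomorphic, so that a local primitive $t$ exists.

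\textbf{Choice of $\sigma$.} Take coordinates $(z,w_1,\dots,w_{d-1})$ on a simply connected neighborhood $U$ of $x$ with $E^u=\ker dz$. Put $e=\pi(\partial_z)$, so that $\nabla e=a\,dz\otimes e$ with $a$ holomorphic by Lemma~\ref{lemma connection holomorphic}. Flatness of $\nabla$ means $d(a\,dz)=0$, which gives $\partial_{w_j}a=0$. Thus $a=a(z)$ locally, after shrinking $U$ to a product polydisc. Let $g(z)$ be a holomorphic primitive of $a$ in $z$ and set $\sigma=e^{-g}e$. Then
\[
\nabla\sigma=(-dg+a\,dz)\otimes\sigma=0,
\]
so $\sigma$ is a nowhere-vanishing parallel holomorphic section. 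Equivalently, we could just invoke the fact that a flat holomorphic connection has local holomorphic parallel frames; the explicit construction avoids any appeal to that fact.

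\textbf{The one-form $\theta$.} Since $\pi$ is an $L$-valued $(1,0)$-form and $\sigma$ trivializes $L$, we may write $\pi=\theta\otimes\sigma$ for a scalar $(1,0)$-form $\theta$. We have $\ker\theta=\ker\pi=E^u$. In coordinates, $\pi=dz\otimes e$, because $\pi(\partial_z)=e$ and $\pi(\partial_{w_j})=0$. Hence
\[
\theta=e^{g(z)}\,dz,
\]
which is holomorphic, depends only on $z$, and is therefore closed.

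\textbf{Conclusion.} Let $G$ be a holomorphic primitive of $e^{g}$ and set $t=G(z)$. Then $dt=\theta$, and $dt$ never vanishes because $e^{g}\neq0$. So $t$ is a holomorphic submersion with $\ker dt=E^u$ and $\pi=dt\otimes\sigma$.

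The only non-routine point is the first step: flatness, not merely holomorphicity, is what forces $a$ to be independent of the $w$-variables. This is what makes $\theta$ closed, rather than just integrable with an integrating factor. Once $a=a(z)$ is established, the rest is one-variable integration on a simply connected disc.
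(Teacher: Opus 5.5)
Your proof is correct and follows the paper's route: flatness gives a parallel holomorphic frame $\sigma$, in this frame $\pi$ becomes a closed holomorphic one-form, and a local primitive of that form is $t$. The only difference is that you work explicitly in coordinates, where flatness forces $a=a(z)$, which yields $\sigma=e^{-g}e$ and $\theta=e^{g(z)}\,dz$. The paper instead obtains $\sigma$ abstractly from flatness and simple connectedness, and derives $d\omega=0$ from the identity $d^\nabla\pi=0$, which holds for any compatible connection.
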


\begin{proof}
Choose a sufficiently small simply connected holomorphic foliation box $U$ with coordinates $(z,w_1,\ldots,w_{d-1})$ such that $E^u=\ker dz$. Put $e=\pi(\partial_z)$, so that $\pi=dz\otimes e$.

Recall that for an $L$-valued one-form, the exterior covariant derivative is given by
\begin{equation}\label{eq general}
    d^\nabla(\omega\otimes e)
    =d\omega\otimes e-\omega\wedge\nabla e.
\end{equation}
Since $\nabla e=a\,dz\otimes e$, we have
\begin{equation}\label{tt:covariantly_closed}
    d^\nabla\pi
    =\bigl(d(dz)-dz\wedge(a\,dz)\bigr)\otimes e=0.
\end{equation}

Flatness and simple connectedness give a nowhere-vanishing parallel section $\sigma$ of $L|_U$. It is holomorphic because $\nabla^{0,1}=\bar\partial_L$. Write $\pi=\omega\otimes\sigma$, where $\omega$ is a holomorphic one-form. Since $\nabla\sigma=0$, \eqref{eq general} and
\eqref{tt:covariantly_closed} give
\[
    0=d^\nabla\pi
    =d^\nabla(\omega\otimes\sigma)
    =d\omega\otimes\sigma.
\]
As $\sigma$ is nowhere vanishing, we obtain $d\omega=0$. After shrinking $U$, the holomorphic Poincar\'e lemma yields
a holomorphic function $t\colon U\to\mathbb C$ such that $dt=\omega$. Thus $\pi=dt\otimes\sigma$.
Since $\pi$ is fiberwise surjective, $dt$ is nowhere vanishing.
Hence $t$ is a holomorphic submersion and
$\ker dt=\ker\pi=E^u|_U$.
\end{proof}
For $x\in U$, set
\[
 B_x=dt_x|_{E_x^s}\colon E_x^s\longrightarrow\mathbb C.
\]
This map is an isomorphism. On a sufficiently small stable plaque through $x$, define
\begin{equation}\label{tt:identification}
 k_x(p)=B_x^{-1}\bigl(t(p)-t(x)\bigr).
\end{equation}
We then prove this definition is independent of the choice of $(t,\sigma)$. 
\begin{lemma}\label{lemma:uniform_local_coordinates}
The maps $k_x$ are independent of the chosen pairs $(t,\sigma)$ and depend continuously on $x$. Moreover, there exist constants $r,\varepsilon>0$ such that for every $x\in X$,
\[
    k_x\colon W^s_r(x)\longrightarrow k_x(W^s_r(x))\subset E_x^s
\]
is a biholomorphism satisfying
\[
    k_x(x)=0,\qquad D_xk_x=\id. 
\]
Here $W^s_r(x)$ denotes the ball of radius $r$ in the stable leaf. Moreover, the image of $k_x$ contains the ball $B(0,\varepsilon)\subset E_x^s$ for every $x\in X$.
\end{lemma}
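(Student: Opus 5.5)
Two pairs $(t,\sigma)$ and $(t',\sigma')$ on a connected overlap $U\cap U'$ are related by a constant affine map. The proof of Lemma~\ref{lemma:uniform_local_coordinates} will deduce independence of choices from this rigidity, and then obtain uniform size from compactness.

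\textbf{Independence.} Both $\sigma$ and $\sigma'$ are nowhere-vanishing $\nabla$-parallel sections of the line bundle $L$. Hence $\sigma=c\,\sigma'$ for a function $c$ with $dc\otimes\sigma'=\nabla\sigma=0$, so $c$ is a nonzero constant. The identity $\pi=dt\otimes\sigma=dt'\otimes\sigma'$ then gives $dt'=c\,dt$, so $t'=c\,t+b$ on the connected overlap (after shrinking to the relevant plaque). Consequently
\[
B'_x=dt'_x|_{E^s_x}=c\,B_x,\qquad t'(p)-t'(x)=c\bigl(t(p)-t(x)\bigr),
\]
and $c$ cancels in \eqref{tt:identification}, so $k_x$ is independent of the pair.

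\textbf{Normalization and continuity.} Since $B_x=dt_x|_{E^s_x}$, the chain rule gives $D_xk_x=B_x^{-1}\circ dt_x|_{E^s_x}=\id$, and $k_x(x)=0$ is clear. For continuity, fix a finite cover by foliation boxes $V_i\Subset U_i$ carrying pairs $(t_i,\sigma_i)$, as in the proof of Lemma~\ref{lemma connection holomorphic}. Inside $U_i$, the map $t_i$ is a fixed holomorphic function. The stable plaques $W^s_r(x)$ vary continuously in the $C^1$ topology, and are in fact holomorphic curves depending continuously on $x$ in $C^\infty$ on compact pieces, by the stable manifold theorem together with the Cauchy estimates. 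Also $E^s_x$ varies continuously. Hence $(x,p)\mapsto k_x(p)$ is continuous, once stable plaques are parametrized, e.g.\ via $\exp_x$ restricted to $E^s_x$ composed with the graph map.

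\textbf{Uniform radius.} This is the main obstacle. The function $t_i|_{W^s_{\mathrm{loc}}(x)}$ is a holomorphic function of one variable with nonzero derivative at $x$. I would use uniform estimates rather than a bare inverse function theorem:
\begin{enumerate}[(i)]
\item By compactness, $|dt_i|_{E^s}|$ is bounded above and below on $\overline V_i$.
\item The second derivative of $t_i$ along the plaque is uniformly bounded, since the plaques are uniformly $C^2$-bounded holomorphic discs.
\end{enumerate}
A quantitative inverse function theorem then shows that $k_x$ is injective on $W^s_r(x)$ for a uniform $r$, with $r$ less than a Lebesgue number of the cover so that each $W^s_r(x)$ lies inside some $U_i$ with $x\in V_i$. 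The same estimate, via Koebe or Rouch\'e, shows that the image contains $B(0,\varepsilon)$ with $\varepsilon$ uniform, say $\varepsilon\approx r/4$ after shrinking. Holomorphicity of $k_x$ and of its inverse is automatic for an injective holomorphic map between one-dimensional complex manifolds.
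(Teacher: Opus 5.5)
Your proposal is correct and follows essentially the same route as the paper. Independence comes from parallel sections differing by a nonzero constant, so $t$ changes by an affine map whose linear factor cancels in $k_x$, and the uniform constants $r,\varepsilon$ come from an inverse function theorem with parameters plus compactness. Your only difference is making that last step quantitative: you use uniform $C^2$ bounds on the holomorphic plaques via Cauchy estimates, then Koebe or Rouch\'e, whereas the paper simply invokes $C^1$-continuity of the plaques and the parametrized inverse function theorem.
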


\begin{proof}
Choose a finite cover $X=\bigcup_{i=1}^N U_i$, where each $U_i$ carries a pair
$(t_i,\sigma_i)$ as above. If $(t_i,\sigma_i)$ and $(t_j,\sigma_j)$ are two pairs, then on each connected component of their overlap we have $\sigma_j=c_{ij}\sigma_i$. Since both sections are parallel,
\[
 0=\nabla\sigma_j=dc_{ij}\otimes\sigma_i.
\]
Thus $c_{ij}\in\mathbb C^*$ is constant. Comparing the two expressions for $\pi$ gives
\begin{equation}\label{tt:affine_changes}
 t_j=a_{ij}t_i+b_{ij},\qquad
 a_{ij}=c_{ij}^{-1}\in\mathbb C^*,\quad b_{ij}\in\mathbb C.
\end{equation}
The factor $a_{ij}$ therefore cancels in \eqref{tt:identification}, so the local definitions of
$k_x$ agree on overlaps. The continuity and the normalizations $k_x(x)=0$ and $D_xk_x=\id$ follow directly from the definition.
The stable plaques vary continuously in the leafwise $C^1$ topology. Since $D_xk_x=\id$, the inverse function theorem with parameters and compactness give uniform constants $r,\varepsilon>0$ with the asserted properties.
\end{proof}

\begin{lemma}\label{lemma:local_linearization}
For $p$ sufficiently close to $x$ in $W^s(x)$,
\[
 k_{fx}(fp)=S_1(x)k_x(p).
\]
\end{lemma}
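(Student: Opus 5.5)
The idea is that $f$ carries $\nabla$-parallel sections to $\nabla$-parallel sections, so $f$ acts affinely in the transverse coordinates $t$. Composing with $k_x$ then linearizes $f$ along stable plaques.

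Fix $x$. Choose a pair $(t,\sigma)$ on a neighborhood $U'$ of $fx$ as in Lemma~\ref{lemma transverse coordinates}, and another pair $(t_0,\sigma_0)$ on a neighborhood $U$ of $x$ with $f(U)\subset U'$.

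The first step is to show that the pulled-back pair $(t\circ f,\ Q_1^{-1}\sigma\circ f)$ is again admissible on $U$. Set $\tilde t=t\circ f$ and $\tilde\sigma(y)=Q_1(y)^{-1}\sigma(fy)$. Since $f$ is holomorphic, $\tilde t$ is a holomorphic submersion. Since $f$ preserves $E^u$, we have $\ker d\tilde t=E^u$. From $\pi_{fy}\circ D_yf=Q_1(y)\circ\pi_y$ and $\pi=dt\otimes\sigma$ we get
\[
\pi_y=Q_1(y)^{-1}\pi_{fy}D_yf=d\tilde t_y\otimes\tilde\sigma(y).
\]
By $f$-invariance of $\nabla$ (Lemma~\ref{lemma continuous connection}) and the pullback convention \eqref{nt:pullback_connection}, $\tilde\sigma$ is $\nabla$-parallel, since $f_\#\tilde\sigma=\sigma$. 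It is holomorphic and nowhere vanishing.

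Next, by the argument of Lemma~\ref{lemma:uniform_local_coordinates}, two such pairs on a connected overlap differ by an affine change \eqref{tt:affine_changes}. Hence $t\circ f=a\,t_0+b$ near $x$ for constants $a\in\mathbb C^*$ and $b\in\mathbb C$. The independence statement of that lemma lets us compute $k_x$ using $\tilde t$ directly. Indeed, the factor $a$ cancels, so $k_x(p)=\tilde B_x^{-1}(\tilde t(p)-\tilde t(x))$ with $\tilde B_x=d\tilde t_x|_{E^s_x}=dt_{fx}|_{E^s_{fx}}\circ S_1(x)=B_{fx}S_1(x)$, where we used $Df(E^s)=E^s$. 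Then for $p$ close to $x$ in $W^s(x)$, so that $fp$ lies in the stable plaque of $fx$ inside $U'$,
\[
k_{fx}(fp)=B_{fx}^{-1}\bigl(t(fp)-t(fx)\bigr)=B_{fx}^{-1}\tilde B_x k_x(p)=S_1(x)k_x(p).
\]

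The only point needing care is the parallelism of $\tilde\sigma$. This amounts to unwinding the convention \eqref{nt:pullback_connection}:
\[
(f^*\nabla)_V\tilde\sigma=Q_1^{-1}\bigl(\nabla_{f_*V}\sigma\bigr)\circ f=0,
\]
and $f^*\nabla=\nabla$. The rest consists of shrinking neighborhoods so that connectedness of the overlaps and the plaque inclusions hold.
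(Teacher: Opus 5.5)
Your proof is correct and takes essentially the same approach as the paper. The paper pushes the source parallel section forward via $f_\#$ and compares it with the target section to get $t'\circ f=c\,t+b$. You instead pull the target pair back to the source and use the affine-change argument of Lemma~\ref{lemma:uniform_local_coordinates} (which in fact holds for any admissible pair), and this is the same computation run in the opposite direction.
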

\begin{proof}
Choose a connected source box $U$ and a target box $U'$ with $f(U)\subset U'$. Let $(t,\sigma)$ and $(t',\sigma')$ be the corresponding pairs. By $f^*\nabla=\nabla$ and \eqref{nt:pullback_connection}, the section $f_\#\sigma$ on $f(U)$ is parallel. Hence there is a constant $c\in\mathbb C^*$ such that
\[
 Q_1(x)\sigma(x)=c\,\sigma'(fx),\qquad x\in U.
\]
Substituting $\pi=dt\otimes\sigma$ in the source and $\pi=dt'\otimes\sigma'$ in the target into $\pi_{fx}D_xf=Q_1(x)\pi_x$, we obtain
\[
 d(t'\circ f)\otimes(\sigma'\circ f)
 =c\,dt\otimes(\sigma'\circ f).
\]
Therefore, for some constant $b\in\mathbb C$,
\begin{equation}\label{tt:affine_dynamics}
 t'\circ f=c\,t+b.
\end{equation}
In particular, $B_{fx}S_1(x)=cB_x$, and consequently
\begin{equation}\label{tt:local_linearization}
\begin{aligned}
 k_{fx}(fp)
 &=B_{fx}^{-1}\bigl(t'(fp)-t'(fx)\bigr)\\
 &=B_{fx}^{-1}c\bigl(t(p)-t(x)\bigr)
 =S_1(x)k_x(p).
\end{aligned}
\end{equation}
\end{proof}
\begin{lemma}\label{lemma affine}
The maps $k_x$ extend to a continuous family of biholomorphisms
\[
 \varphi_x\colon W^s(x)\longrightarrow E_x^s
\]
satisfying
\[
 \varphi_x(x)=0,\qquad D_x\varphi_x=\id,\qquad
 \varphi_{fx}\circ f=S_1(x)\circ\varphi_x.
\]
\end{lemma}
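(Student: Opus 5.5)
Define the extension by the standard pull-back-along-the-orbit trick. For $p\in W^s(x)$ the distance $d_s(f^nx,f^np)\to0$, so for $n$ large enough (say $f^np\in W^s_r(f^nx)$) we set
\[
 \varphi_x(p)=S_n(x)^{-1}\,k_{f^nx}(f^np),
\]
where $S_n(x)=S_1(f^{n-1}x)\cdots S_1(x)$.

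The first step is to check that this is independent of $n$. If $f^np\in W^s_r(f^nx)$, then $f^{n+1}p$ is also close to $f^{n+1}x$ in the stable leaf, since $f$ contracts stable leaves up to the constant $C$. Replacing $n$ by a larger multiple of $k$, or shrinking $r$ so that Lemma~\ref{lemma:local_linearization} applies on $W^s_r$, we use the identity
\[
 k_{f^{n+1}x}(f^{n+1}p)=S_1(f^nx)\,k_{f^nx}(f^np),
\]
which gives $S_{n+1}(x)^{-1}k_{f^{n+1}x}(f^{n+1}p)=S_n(x)^{-1}k_{f^nx}(f^np)$. Here we need that Lemma~\ref{lemma:local_linearization} holds on a plaque of uniform size. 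This follows from its proof, because $t'\circ f=c\,t+b$ holds on a whole connected box, and by compactness we can take uniform boxes.

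The remaining properties follow quickly.
\begin{itemize}
\item Agreement with $k_x$ near $x$ is the case $n=0$.
\item The identities $\varphi_x(x)=0$ and $D_x\varphi_x=\id$ follow from Lemma~\ref{lemma:uniform_local_coordinates}.
\item The equivariance $\varphi_{fx}(fp)=S_{n}(fx)^{-1}k_{f^{n+1}x}(f^{n+1}p)=S_1(x)S_{n+1}(x)^{-1}k_{f^{n+1}x}(f^{n+1}p)=S_1(x)\varphi_x(p)$ is immediate.
\item Holomorphicity holds because on the open set $\{p: f^np\in W^s_r(f^nx)\}$ the map $\varphi_x$ is a composition of holomorphic maps ($f^n$ restricted to the leaf, $k_{f^nx}$, and a linear map). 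These open sets exhaust $W^s(x)$.
\item Local injectivity with nonvanishing derivative holds because each piece is a biholomorphism onto its image.
\item Continuity in $x$, uniformly on compact leafwise pieces, follows from the continuity of $k_x$ and of $S_n$, since on a compact piece a single $n$ works for nearby $x$.
\end{itemize}

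The main obstacle is global bijectivity of $\varphi_x\colon W^s(x)\to E^s_x$.

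For injectivity, take $p,q$ with $\varphi_x(p)=\varphi_x(q)$ and choose $n$ so large that both $f^np$ and $f^nq$ lie in $W^s_{r}(f^nx)$. Then $k_{f^nx}(f^np)=k_{f^nx}(f^nq)$, and injectivity of $k_{f^nx}$ on $W^s_r$ gives $p=q$.

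For surjectivity, take $v\in E^s_x$. Since $\|S_n(x)\|\le C\tau^n$, we have $|S_n(x)v|<\varepsilon$ for large $n$, so by Lemma~\ref{lemma:uniform_local_coordinates} there is $q\in W^s_r(f^nx)$ with $k_{f^nx}(q)=S_n(x)v$. Put $p=f^{-n}q\in W^s(x)$. Then $\varphi_x(p)=v$, provided the same $n$ is admissible for $p$, which holds by the independence of $n$ shown in the first step.

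Thus $\varphi_x$ is a bijective holomorphic local biholomorphism, hence a biholomorphism onto $E^s_x$. The delicate point throughout is making sure the uniform radius $r$ is compatible with the one-step linearization lemma. I would handle this by shrinking $r$ so that $f(W^s_r(y))\subset W^s_{r}(fy)$ within the domain where Lemma~\ref{lemma:local_linearization} holds, using a uniform adapted metric if necessary.
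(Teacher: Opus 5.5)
Your proposal is correct and follows essentially the same route as the paper: you define $\varphi_x(p)=S_n(x)^{-1}k_{f^nx}(f^np)$, get independence of $n$ from the one-step linearization, and obtain injectivity and surjectivity from the uniform constants $r,\varepsilon$ of Lemma~\ref{lemma:uniform_local_coordinates}. Your extra care in fixing a uniform radius on which the one-step identity holds, and in making the set of admissible $n$ forward-invariant, fills in a point the paper passes over quickly.
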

\begin{proof}
For $p\in W^s(x)$, choose $n$ sufficiently large that $f^np$ belongs to the local stable plaque on which $k_{f^nx}$ is defined, and set
\begin{equation}\label{tt:global_linearization}
 \varphi_x(p)=S_n(x)^{-1}k_{f^nx}(f^np).
\end{equation}
By Lemma~\ref{lemma:local_linearization}, this expression is independent of all sufficiently large choices of $n$. It defines a holomorphic map agreeing with $k_x$ near $x$.

For injectivity, if $\varphi_x(p)=\varphi_x(q)$, choose $n$ large enough that $f^np$ and $f^nq$ lie in $W^s_r(f^nx)$. The injectivity of $k_{f^nx}$ then gives $p=q$. For surjectivity, let $v\in E_x^s$ and choose $n$ so large that $\|S_n(x)v\|<\varepsilon$, where $\varepsilon$ is given by Lemma~\ref{lemma:uniform_local_coordinates}. Then
\[
 p=f^{-n}\bigl(k_{f^nx}^{-1}(S_n(x)v)\bigr)
\]
lies in $W^s(x)$ and satisfies $\varphi_x(p)=v$. Thus $\varphi_x$ is a biholomorphism. The same formula, with a single $n$ chosen locally in $(x,v)$, shows that $(x,v)\mapsto\varphi_x^{-1}(v)$ is continuous.

The continuity of these biholomorphisms and the stated normalization and equivariance follow from the iteration and the local properties in Lemma \ref{lemma:uniform_local_coordinates}. \qedhere


\end{proof}

This non-stationary linearization is also essentially established by Ghys \cite[Proposition 3.1]{ghys_1995}, in the language of invariant affine structure. Our approach here is different and offers more information about the unstable holonomy maps. 
\begin{lemma}\label{lemma local holonomy}
For any $x \in X$ and $y\in W^u_{\mathrm{loc}}(x)$, let $h^u_{xy}$ be local unstable holonomy between stable plaques. Then 
\begin{equation}\label{tt:linear_holonomy}
 \varphi_y\circ h^u_{xy}\circ \varphi_x^{-1}
 =D_xh^u_{xy}
\end{equation}
on a neighborhood of $0\in E_x^s$. In particular, the conjugated local map is the restriction of a complex-linear isomorphism
\end{lemma}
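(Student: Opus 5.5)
The key point is that the unstable holonomy preserves the transverse coordinate $t$ of Lemma~\ref{lemma transverse coordinates}. I will work inside a single foliation box $U$ containing $x$, $y$, the relevant stable plaques and the local unstable plaques joining them; such a box exists after shrinking everything, since $y\in W^u_{\mathrm{loc}}(x)$. Fix a pair $(t,\sigma)$ on $U$. Because $\ker dt=E^u$ and $t$ is a holomorphic submersion, $t$ is constant on each local unstable plaque in $U$ (each plaque is connected and tangent to $E^u$). The holonomy $h^u_{xy}$ sends a point $p$ of the stable plaque through $x$ to the point where the local unstable plaque of $p$ meets the stable plaque through $y$. Hence
\[
 t\bigl(h^u_{xy}(p)\bigr)=t(p)
\]
for all $p$ near $x$. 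In particular $t(y)=t(x)$, since $h^u_{xy}(x)=y$.

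Next I use this identity in the local coordinates. Near $x$ and $y$ the maps $\varphi_x$ and $\varphi_y$ coincide with $k_x$ and $k_y$ by Lemma~\ref{lemma affine}. By Lemma~\ref{lemma:uniform_local_coordinates}, $k_x$ and $k_y$ may be computed with the same pair $(t,\sigma)$. Take $v\in E^s_x$ small and set $p=k_x^{-1}(v)$, so that $t(p)-t(x)=B_xv$. Then
\[
 k_y\bigl(h^u_{xy}(p)\bigr)=B_y^{-1}\bigl(t(h^u_{xy}p)-t(y)\bigr)=B_y^{-1}\bigl(t(p)-t(x)\bigr)=B_y^{-1}B_x v .
\]
Thus $\varphi_y\circ h^u_{xy}\circ\varphi_x^{-1}$ equals the complex-linear map $B_y^{-1}B_x\colon E^s_x\to E^s_y$ on a neighborhood of $0$. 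This map is an isomorphism because $B_x$ and $B_y$ are.

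It remains to identify this linear map with $D_xh^u_{xy}$. Differentiate the conjugated map at $0$ and use $D_x\varphi_x=\id$ and $D_y\varphi_y=\id$, both from Lemma~\ref{lemma affine}. The derivative at $0$ of $\varphi_y\circ h^u_{xy}\circ\varphi_x^{-1}$ is then $D_xh^u_{xy}$. A linear map equals its own derivative, so $D_xh^u_{xy}=B_y^{-1}B_x$, which gives \eqref{tt:linear_holonomy}. Note that this also shows that $h^u_{xy}$ is holomorphic, indeed differentiable, on stable plaques; this is not assumed a priori.

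The main obstacle is the first step, namely justifying that $t$ is constant along the local unstable plaques used to define $h^u_{xy}$. The leaves of the unstable foliation are tangent to the holomorphic involutive bundle $E^u$, so they are exactly the fibers of the local holomorphic foliation coordinates $(z,w)$; in these coordinates $t$ is a function of $z$ alone. One must still ensure that $x$, $y$, the plaques $W^s_r(x)$ and $W^s_r(y)$, and the connecting unstable plaques all lie in one connected box on which a single pair $(t,\sigma)$ is defined. This follows by taking $r$ and the size of $W^u_{\mathrm{loc}}$ small relative to a Lebesgue number of the finite cover in Lemma~\ref{lemma:uniform_local_coordinates}.
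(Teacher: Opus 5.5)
Your proposal is correct and follows essentially the same argument as the paper: constancy of $t$ on unstable plaques gives $t\circ h^u_{xy}=t$ and $t(y)=t(x)$, hence $\varphi_y\circ h^u_{xy}\circ\varphi_x^{-1}=B_y^{-1}B_x$ near $0$, and differentiating at $0$ with $D_x\varphi_x=\id$ and $D_y\varphi_y=\id$ identifies this linear map with $D_xh^u_{xy}$. You also note explicitly that $h^u_{xy}$ is holomorphic on stable plaques, which justifies the chain rule; the paper leaves this implicit.
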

\begin{proof}
Choose a transverse coordinate box as above containing $x$ and $y$, and shrink the stable plaques so that each holonomy segment stays in an unstable plaque of this box. Since $t$ is constant on unstable plaques,
\[
    t\circ h^u_{xy}=t,\qquad t(y)=t(x).
\]
By \eqref{tt:identification} and Lemma~\ref{lemma affine}, for $p$ sufficiently close to $x$ in $W^s(x)$,
\[
\begin{aligned}
    \varphi_y(h^u_{xy}(p))
    &=B_y^{-1}\bigl(t(h^u_{xy}(p))-t(y)\bigr)\\
    &=B_y^{-1}\bigl(t(p)-t(x)\bigr)\\
    &=B_y^{-1}B_x\varphi_x(p).
\end{aligned}
\]
Differentiating at $p=x$ and using $D_x\varphi_x=\mathrm{id}$ and $D_y\varphi_y=\mathrm{id}$ gives $D_xh^u_{xy}=B_y^{-1}B_x$.
\end{proof}
\subsection{Global holonomy and the topological conclusion}
Recall that Ghys \cite[Theorem B]{ghys_1995} proved Theorem \ref{theorem topo} under the additional assumption that $f$ is topologically transitive. One main step is 
\begin{proposition}[Proof of Theorem B in \cite{ghys_1995}, p.598]\label{prop ghys}
Let $f:X \to X$ be a holomorphic Anosov diffeomorphism of a compact manifold with $\dim_\mathbb C E^s=1$. If the unstable holonomy maps are biholomorphisms between the entire stable leaves, then $X$ is homeomorphic to $\mathbb T^{2d}$ and $f$ is topologically conjugate to a torus automorphism. 
\end{proposition}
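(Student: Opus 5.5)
We would follow Ghys's argument for Theorem~B, reorganized so that the only global input is the hypothesis that each unstable holonomy $h^u_{xy}\colon W^s(x)\to W^s(y)$ (for $y\in W^u(x)$) is a biholomorphism of entire stable leaves. Each stable leaf is biholomorphic to $\mathbb C$ by Lemma~\ref{lemma affine}, and biholomorphisms of $\mathbb C$ are affine. In the charts $\varphi_x$, every global holonomy is therefore an affine map $E^s_x\to E^s_y$; by Lemma~\ref{lemma local holonomy}, its linear part is $D_xh^u_{xy}$.

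\textbf{Step 1: product structure on the universal cover.} Lift to the universal cover $\tilde X$. Since the holonomies are globally defined, the map $W^s(\tilde x)\times W^u(\tilde x)\to\tilde X$, $(p,y)\mapsto W^u(p)\cap W^s(y)$, is well defined. This uses holonomy along unstable paths together with the fact that unstable leaves are simply connected; they are biholomorphic to $\mathbb C^{d-1}$ by the standard unstable-manifold theorem. A standard connectedness and openness argument then shows the map is a homeomorphism. Hence $\tilde X\cong\mathbb R^{2}\times\mathbb R^{2d-2}$, and any two leaves $\tilde W^s,\tilde W^u$ meet in exactly one point. Consequently, $\pi_1(X)$ acts freely on the space of stable leaves. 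This space is identified with one unstable leaf $\cong\mathbb C^{d-1}$, and likewise the space of unstable leaves is identified with a stable leaf $\cong\mathbb C$.

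\textbf{Step 2: an affine action on $\mathbb C$ and abelianness.} Each deck transformation $\gamma$ maps unstable leaves to unstable leaves. Composed with holonomy, it therefore induces a map $W^s(\tilde x)\to W^s(\tilde x)$ which, in the chart $\varphi_{\tilde x}$, is a biholomorphism of $\mathbb C$, hence affine. This gives a homomorphism $\rho\colon\pi_1(X)\to\mathrm{Aff}(\mathbb C)$. The action is free, and $\pi_1(X)$ acts on the quotient of the leaf space with compact quotient, by compactness of $X$. Following Ghys, we would then show that $\rho$ has no nontrivial linear parts. Otherwise, some element fixes a point of $\mathbb C$, which means it preserves an unstable leaf; a deck transformation preserving a leaf $\cong\mathbb C^{d-1}$ together with the expansion gives a contradiction. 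We thus obtain a faithful free action by translations, which makes $\pi_1(X)$ abelian. We also need $f$ to induce an affine map intertwining this action, namely $\varphi_{fx}\circ f=S_1(x)\circ\varphi_x$.

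\textbf{Step 3: conclusion.} Since $\tilde X$ is contractible, $X$ is a $K(\pi,1)$ with $\pi=\pi_1(X)$ abelian and torsion-free. Compactness of a closed aspherical $2d$-manifold forces $\pi\cong\mathbb Z^{2d}$, so $X$ is homotopy equivalent to $\mathbb T^{2d}$. Franks' theorem (an Anosov diffeomorphism of a manifold with $\pi_1=\mathbb Z^{n}$ and torus-like universal cover is conjugate to its linear part), or Manning's result for infranil cases, then gives the topological conjugacy to the hyperbolic automorphism $f_*$. It also gives a homeomorphism $X\cong\mathbb T^{2d}$, since the conjugacy is itself a homeomorphism onto the torus.

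\textbf{Main obstacle.} The hardest step will be Step~2: ruling out nontrivial linear parts of $\rho$ and proving that $\pi_1(X)$ is abelian, i.e.\ showing the translation part is faithful. I would first try to adapt Ghys's argument. That argument uses the $f$-equivariance $\rho(f_*\gamma)=S\,\rho(\gamma)\,S^{-1}$ to contract commutators: commutators lie in the translation subgroup, their conjugates under $f_*^{n}$ shrink, and discreteness of the orbit then forces them to be trivial.
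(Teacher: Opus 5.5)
Your overall route (global unstable holonomy $\Rightarrow$ product structure on $\widetilde X$ $\Rightarrow$ $\rho\colon\pi_1(X)\to\mathrm{Aff}(\mathbb C)$ $\Rightarrow$ free action by translations $\Rightarrow$ $\pi_1(X)\cong\mathbb Z^{2d}$ $\Rightarrow$ Franks--Manning) is the argument the paper takes from Ghys; the paper gives no proof beyond the citation. Steps~1--2 are essentially right, with the following corrections.

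\textbf{Step 2.} The step you flag as the main obstacle is already closed by your own freeness remark. No commutator contraction and no $f$-equivariance are needed.
\begin{itemize}
\item Each unstable leaf $L\subset X$ is simply connected. It is an increasing union of disks, hence homeomorphic to $\mathbb R^{2d-2}$. That it is biholomorphic to $\mathbb C^{d-1}$ is neither standard nor needed.
\item So a lifted leaf $\widetilde L$ maps homeomorphically onto $L$, and no deck transformation $\gamma\neq1$ can preserve $\widetilde L$.
\item Hence $\rho(\gamma)$ has no fixed point for $\gamma\ne1$. A fixed-point-free affine map of $\mathbb C$ is a nonzero translation, so $\rho$ embeds $\pi_1(X)$ into $(\mathbb C,+)$.
\end{itemize}
This is the same mechanism as in Lemma~\ref{lem:ha_closed_form}, where it is phrased through the linear model.

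\textbf{Step 1.} Openness and connectedness give only surjectivity of $(p,y)\mapsto h^u_{\tilde x y}(p)$. Injectivity, i.e.\ that lifted leaves meet at most once, is the substantive part. You must show this local homeomorphism is a covering map, using leafwise path lifting, and then use $\pi_1(\widetilde X)=1$.

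\textbf{Step 3.} This is the step that genuinely needs repair, and it is exactly where transitivity entered Ghys's proof.
\begin{itemize}
\item Franks' theorem for a manifold with $\pi_1\cong\mathbb Z^n$ and universal cover $\mathbb R^n$ needs $f_*$ to be hyperbolic. In Franks' work this comes from the hypothesis $\Omega(f)=X$, which transitivity provides and which you do not have.
\item Manning's theorem removes that hypothesis, but it is stated for tori (infranilmanifolds). That $X$ is one is what you are trying to prove.
\end{itemize}
So your citation ``Franks, or Manning'' leaves a bridge missing. The simplest fix is to upgrade your homotopy equivalence $X\simeq\mathbb T^{2d}$ to a homeomorphism by topological rigidity of tori (Hsiang--Shaneson, Wall; here $2d\ge6$). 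Then apply the Franks--Manning theorem for tori, exactly as the paper does at the start of Section~\ref{sec torus Anosov}. Alternatively, verify directly that Manning's proof of hyperbolicity of $f_*$ goes through for homotopy tori.
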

This proposition does not use the transitivity assumption. To apply this proposition, Ghys established the global existence of the unstable holonomy by using the transitivity assumption  \cite[Lemma 5.1]{ghys_1995}. Here we prove the global existence of the unstable holonomy using a similar argument, but use Lemma \ref{lemma local holonomy} as an alternative to the  transitivity assumption. 
\begin{lemma}\label{lemma:global_unstable_holonomy}
For any $x\in X$ and $y\in W^u(x)$, the unstable holonomy is a biholomorphism
\[
    h_{xy}^u\colon W^s(x)\longrightarrow W^s(y).
\]
\end{lemma}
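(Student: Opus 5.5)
The plan is to define the global holonomy as a linear map in the coordinates $\varphi_x,\varphi_y$, and to show that this linear map is the holonomy by analytic continuation along unstable paths.

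\textbf{Reduction to local unstable leaves.} Since $y\in W^u(x)$, choose a path $\gamma\colon[0,1]\to W^u(x)$ from $x$ to $y$. Subdivide it into points $x=x_0,x_1,\dots,x_m=y$ with $x_{i+1}\in W^u_{\mathrm{loc}}(x_i)$ inside a single foliation box. Composing the resulting maps reduces the lemma to the local case $y\in W^u_{\mathrm{loc}}(x)$, provided the global maps obtained are well defined, i.e.\ independent of the chain. The candidate global holonomy is
\[
H_{xy}=\varphi_y^{-1}\circ D_xh^u_{xy}\circ\varphi_x\colon W^s(x)\to W^s(y).
\]
By Lemma~\ref{lemma affine} it is a biholomorphism, since it is a composition of two biholomorphisms and a complex-linear isomorphism. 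By Lemma~\ref{lemma local holonomy} it agrees with $h^u_{xy}$ near $x$.

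\textbf{Showing $H_{xy}$ is the unstable holonomy.} We must show that $H_{xy}(p)\in W^u(p)$ for every $p\in W^s(x)$, and that $H_{xy}(p)$ is the endpoint of the lift of the path. Consider the set $G\subset W^s(x)$ of points $p$ for which the unstable holonomy along (the lift of) $\gamma$ continues to $p$ and equals $H_{xy}(p)$. Since $W^s(x)\cong\mathbb C$ is connected, it suffices to show that $G$ is nonempty, open and closed. It is nonempty (it contains a neighborhood of $x$ by Lemma~\ref{lemma local holonomy}).

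\emph{Openness.} Apply Lemma~\ref{lemma local holonomy} at the base point $p$ and at $q=H_{xy}(p)$. Near $p$ the local holonomy $h^u_{pq}$ is linear in the charts $\varphi_p,\varphi_q$. The charts $\varphi_p$ and $\varphi_x$ on the same leaf differ by an affine map $E^s_x\to E^s_p$: this follows from the affine transition structure \eqref{tt:affine_changes}, since the $t$-coordinates restricted to a stable leaf are affine-related, and extends globally via \eqref{tt:global_linearization}. Hence two affine maps agreeing on an open set coincide, and $h^u$ extends across a neighborhood of $p$ and agrees with $H_{xy}$ there.

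\emph{Closedness.} This is the main obstacle. The difficulty is that the unstable lifts of $\gamma$ starting at $p_k\to p$ must not escape to infinity in $W^u$ as $k\to\infty$. I would first try to use the dynamics to shrink the paths. For fixed $x$ and $y$, apply $f^{-n}$: the path $f^{-n}\gamma$ has length at most $C\lambda^{-n}\operatorname{length}(\gamma)$. Uniform local product structure then makes the holonomy along $f^{-n}\gamma$ defined on stable plaques of a uniform size $r$. Conjugating by $f^n$, and using the equivariance $\varphi_{fx}\circ f=S_1(x)\circ\varphi_x$, holonomy along $\gamma$ is defined on $f^n(W^s_r(f^{-n}x))$. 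This set grows while $f^{-n}$ contracts $\gamma$. However, stable leaves contract under $f$, not $f^{-1}$, so this naive approach fails. Instead, I would use that holonomy along $\gamma$ is linear in the $\varphi$-charts on every domain where it is defined, combined with a uniform local product structure. Every point $q$ in the closure lies on a local unstable plaque of uniform size $\delta$. Taking $p_k\to p$ in $G$, the points $H_{xy}(p_k)\to H_{xy}(p)$ by continuity of the explicit formula. The lifted paths from $p_k$ vary continuously, and in a limit box the uniqueness of the local product structure gives $H_{xy}(p)\in W^u(p)$ with the correct lifted path.

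\textbf{Well-definedness.} Independence from the chosen path follows similarly: two chains give two linear maps in the charts which agree near $x$, because each is the local holonomy there, and hence agree globally.
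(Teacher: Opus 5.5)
\textbf{Gap: the closedness step.} You correctly identify closedness as the crux, but you do not prove it. Endpoint information alone is not enough. From $p_k\to p$, $H_{xy}(p_k)\in W^u(p_k)$ and $H_{xy}(p_k)\to H_{xy}(p)$ you cannot conclude $H_{xy}(p)\in W^u(p)$. The leafwise distance from $p_k$ to $H_{xy}(p_k)$ may blow up, since unstable leaves are only immersed (in the end they are dense). Likewise, the intermediate points $\tilde\gamma_{p_k}(t)\in W^s(\gamma(t))$ could run off to infinity in the stable leaves. Your sentence ``the lifted paths from $p_k$ vary continuously'' is exactly what must be shown. Nothing earlier supplies it, because you only use the linear formula for the full path $\gamma$.

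The missing idea is to apply Lemma~\ref{lemma local holonomy} to every initial subpath $\gamma|_{[0,t]}$. Your set $G$ lies in the domain of every $h_{\gamma|_{[0,t]}}$, so by the identity theorem the lift of $\gamma$ from $p_k$ is given explicitly by $t\mapsto H_t(p_k)$, where $H_t=\varphi_{\gamma(t)}^{-1}\circ D_xh^u_{x\gamma(t)}\circ\varphi_x$. The map $(t,p)\mapsto H_t(p)$ is jointly continuous by Lemma~\ref{lemma affine} together with the continuity of $t\mapsto D_xh^u_{x\gamma(t)}$. Hence the lifted paths converge uniformly to $t\mapsto H_t(p)$. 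Uniform local product structure then shows this limit is a lift of $\gamma$ from $p$: a path inside a box stays in one plaque, and plaques through nearby points are close. This joint continuity in $(t,p)$ is the heart of the paper's proof. The paper runs the continuation in the time parameter $t$ on fixed compact disks $D_R=\varphi_x^{-1}(\overline{B(0,R)})$, rather than in the point $p$ with $\gamma$ fixed. Once you add it, your open--closed argument in $W^s(x)$ also works, and the dynamical detour can be dropped.

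\textbf{Smaller points.} In the openness step you only know agreement with $H_{xy}$ at the single point $p$, not on an open set. Also, Lemma~\ref{lemma local holonomy} applies only when $H_{xy}(p)\in W^u_{\mathrm{loc}}(p)$, which fails if the lifted path is long. It is cleaner to let $G$ be the connected component containing $x$ of the domain of holonomy along $\gamma$. That domain is open, the holonomy is holomorphic there because $E^u$ is holomorphic, and the identity theorem gives $h=H_{xy}$ on $G$, so only closedness remains. Finally, justify independence of the chain by homotopy invariance of holonomy germs and simple connectivity of unstable leaves. Saying that each composite is ``the local holonomy'' near $x$ does not establish it.
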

\begin{proof}
By composing holonomies, we may assume that $y\in W^u_{\mathrm{loc}}(x)$ is close to $x$. Let $\gamma\colon[0,1]\to W^u_{\mathrm{loc}}(x)$ be a path  from $x$ to $y$ , and let $h_t$ denote holonomy along $\gamma|_{[0,t]}$. Fix $R>0$ and set
\[
    D_R=\varphi_x^{-1}\bigl(\overline{B(0,R)}\bigr).
\]
Let $t_0$ be the supremum of times $t\in[0,1]$ such that $h_s$ is defined on all of $D_R$ for every $s\in[0,t]$.
Compactness of $D_R$ and local product structure give $t_0>0$.

For $t\in[0,1]$, define
\[
    H_t
    :=\varphi_{\gamma(t)}^{-1}\circ D_xh^u_{x\gamma(t)}\circ\varphi_x
    \colon W^s(x)\longrightarrow W^s(\gamma(t)).
\]
For $t<t_0$, Lemma~\ref{lemma local holonomy} and the identity theorem give $h_t=H_t$ on $D_R$.
Since $(t,p)\mapsto H_t(p)$ is continuous on $[0,1]\times D_R$, we have
\[
    h_t\longrightarrow H_{t_0}
    \quad\text{uniformly on }D_R
    \quad\text{as }t\uparrow t_0.
\]

For each $p\in D_R$, the path $t\mapsto h_t(p)$ lies in a single unstable plaque for $t<t_0$ sufficiently close to $t_0$, so its limit belongs to the same plaque. Thus the holonomy extends to $t_0$. Compactness of $H_{t_0}(D_R)$ and local product structure then allow further continuation if $t_0<1$, contradicting the definition of $t_0$. Hence $t_0=1$, and $h_1$ is defined on all of $D_R$. Since $R$ was arbitrary, $h_1$ is globally defined and
coincides with the biholomorphism $H_1$.
\end{proof}
Proposition~\ref{prop ghys} and Lemma~\ref{lemma:global_unstable_holonomy} now prove Theorem~\ref{theorem topo}.



\section{Proof of Theorem~\ref{theorem anosov torus}}\label{sec torus Anosov}
The proof rests on three observations. First, all deck
transformations of the universal cover $\wt X$ are biholomorphic. Second, each deck translation can be decomposed into stable and unstable components. When the stable and unstable foliations are holomorphic, both components act biholomorphically, and their translation vectors form dense subgroups of the corresponding linear subspaces. Third, locally uniform limits of holomorphic maps are holomorphic. Taking limits therefore shows that all translations act biholomorphically, which forces the corresponding complex structure to be translation-invariant.

Let $X$ and $f$ satisfy the assumptions of
Theorem~\ref{theorem anosov torus}. By the Franks--Manning theorem
\cite{franks_1969,manning_1974}, there exist a real torus
\[
  N=V/\Gamma,
  \qquad V\simeq\R^{2d},\quad \Gamma\simeq\Z^{2d},
\]
a hyperbolic affine automorphism $\alpha$ of $N$, and a homeomorphism
$h\colon X\to N$ such that
\[
  h\circ f=\alpha\circ h.
\]
The affine map $\alpha$ has a fixed point. After conjugating it by a translation, we may therefore assume that it is induced by a linear automorphism $A\colon V\to V$ satisfying $A(\Gamma)=\Gamma$.

Let $\wt X$ be the universal cover of $X$, and let $F,A,H$ denote compatible
lifts of $f,\alpha,h$, respectively. Then
\begin{equation}\label{equation Anosov equivariance}
  H\circ F=A\circ H.
\end{equation}
The homeomorphism $H$ maps lifted stable and unstable leaves to affine
subspaces parallel to the stable and unstable subspaces of $A$, respectively.

Write $V=V^s\oplus V^u$ for the hyperbolic splitting of $A$, and let
$p_s\colon V\to V^s$ and $p_u\colon V\to V^u$ be the corresponding
projections.

\begin{lemma}\label{lemma projection density}
We have
\[
  \ol{p_s(\Gamma)}=V^s,
  \qquad
  \ol{p_u(\Gamma)}=V^u.
\]
\end{lemma}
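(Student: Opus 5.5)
The plan is a purely linear-algebraic argument about a hyperbolic integer matrix. I prove the statement for $V^s$; the case of $V^u$ follows by replacing $A$ with $A^{-1}$, which swaps the two subspaces.

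Set $G=\ol{p_s(\Gamma)}$. It is a closed subgroup of $V^s$. Since $p_s\circ A=A\circ p_s$ and $A(\Gamma)=\Gamma$, we get $A(G)=G$. By the structure theorem for closed subgroups of a vector space, $G=W\oplus D$, where $W=G_0$ is a linear subspace and $D$ is discrete. The identity component $W$ is intrinsic to $G$, so $A(W)=W$. Consequently the discrete group $G/W\subset V^s/W$ is invariant under the induced map $\bar A$. This map is still contracting, since it is a quotient of $A|_{V^s}$.

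Next I show $G/W$ is trivial. Take any element $\bar g$ of $G/W$. Its iterates $\bar A^n\bar g$ lie in $G/W$ and tend to $0$. Because $G/W$ is discrete, this forces $\bar A^n\bar g=0$ for large $n$, and invertibility of $\bar A$ then gives $\bar g=0$. Hence $G=W$ is a linear subspace. It only remains to exclude $W\ne V^s$.

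This last step is the main obstacle, and I would handle it with a rationality argument. Suppose $W\subsetneq V^s$. Then $\Gamma\subset p_s^{-1}(W)=W\oplus V^u$, a proper subspace of $V$. But $\Gamma$ spans $V$, so this is a contradiction. In fact this observation can be made at the very start: $p_s(\Gamma)$ spans $V^s$, so any closed subgroup containing it is not contained in a proper subspace. The real work is therefore the reduction to "$G$ is a subspace", which the discreteness and contraction argument above already gives.

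As a sanity check, note that the structure theorem only says $G\cong W\oplus\Z^k$, and the lattice part $D$ may span directions outside $W$. The argument does not depend on this: since $G/W$ is trivial, $G=W$, so $W$ is a linear subspace containing $p_s(\Gamma)$. As $p_s(\Gamma)$ spans $V^s$, we conclude $W=V^s$.

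Thus the steps are, in order: closedness and $A$-invariance of $G$; the decomposition $G=W\oplus D$ with $A(W)=W$; killing $D$ using contraction and discreteness; and the spanning argument. The only point needing care is that the quotient map $\bar A$ really contracts, so that iterates of a nonzero discrete element tend to zero. This holds because $\|\bar A^n\|\le\|A^n|_{V^s}\|\to0$ in the quotient norm.
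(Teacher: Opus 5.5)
Your argument is correct, and it takes a different route from the paper.

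The paper's proof is one line. It invokes the minimality of the unstable foliation of the hyperbolic toral automorphism, i.e.\ density of $\pi(V^u)$ in $V/\Gamma$. It rewrites this as density of $V^u+\Gamma$ in $V$ and passes to the quotient $V/V^u\cong V^s$.

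You instead work directly with the closed subgroup $G=\overline{p_s(\Gamma)}\subset V^s$, in three steps:
\begin{enumerate}
\item Its identity component $W$ is an $A$-invariant subspace.
\item The discrete group $G/W$ is invariant under the contraction $\bar A$, hence trivial.
\item Since $p_s(\Gamma)$ spans $V^s$, this forces $W=V^s$.
\end{enumerate}

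Your argument is essentially the dual of the standard proof of the minimality fact the paper cites. There one shows that the closure of $\pi(V^u)$ is an $A$-invariant subtorus. A proper one is then ruled out because $A$ would induce a contracting automorphism of the quotient lattice.

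So your version is self-contained. It uses only $A(\Gamma)=\Gamma$, the $A$-invariance of the splitting, contraction on $V^s$, and the fact that $\Gamma$ spans $V$. The paper's version is shorter but treats the dynamical minimality statement as a black box.

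One point should be stated cleanly: why $G/W$ is discrete in $V^s/W$. Since $G$ is closed and $W$-saturated, $G/W$ is closed. A nonzero subspace contained in $G/W$ would pull back to a connected subgroup of $G$ strictly larger than $W=G_0$. Your remark that $D$ ``may span directions outside $W$'' is beside the point. In the structure theorem the generators of $D$ are linearly independent modulo $W$, and that is exactly what gives discreteness of $G/W$.
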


\begin{proof}
We give the argument for $p_s$. Let $\pi\colon V\to V/\Gamma$ be the quotient map. Since the unstable
foliation of the hyperbolic toral automorphism $A$ is minimal, $\pi(V^u)$ is
dense in $V/\Gamma$. Equivalently, $V^u+\Gamma$ is dense in $V$. Passing to
the quotient $V/V^u\simeq V^s$ shows that $p_s(\Gamma)$ is dense in $V^s$.
Similarly, the minimality of the stable foliation implies that $p_u(\Gamma)$
is dense in $V^u$.
\end{proof}
For $v\in V$, define
\[
  \Phi_v:=H^{-1}\circ L_v\circ H\colon\wt X\to\wt X,
\]
where $L_v$ denotes translation by $v$.

\begin{lemma}\label{lemma anosov translation}
The map $\Phi_v$ is biholomorphic for every $v\in V$.
\end{lemma}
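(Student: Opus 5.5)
The strategy follows the outline at the start of this section. We first show that $\Phi_\gamma$ is biholomorphic for $\gamma\in\Gamma$. Next we show that the "stable component" $\Phi_{p_s(\gamma)}$ and the "unstable component" $\Phi_{p_u(\gamma)}$ are biholomorphic. Finally we use density (Lemma~\ref{lemma projection density}) together with closedness of holomorphic maps under locally uniform limits to reach all $v\in V$.

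\textbf{Step 1 (deck transformations).} Since $h$ is a homeomorphism $X\to N$ and the lifts are compatible, $H$ intertwines the deck actions. The deck group of $\wt X$ is identified with $\Gamma$, and $\Phi_\gamma=H^{-1}L_\gamma H$ is the deck transformation corresponding to $\gamma$. Deck transformations of the universal cover of a complex manifold are biholomorphic, so each $\Phi_\gamma$ is biholomorphic.

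\textbf{Step 2 (stable and unstable components).} Fix $\gamma\in\Gamma$ and write $\gamma=\gamma_s+\gamma_u$ with $\gamma_s=p_s(\gamma)$ and $\gamma_u=p_u(\gamma)$. I would show that $\Phi_{\gamma_s}$ is biholomorphic; the other component is symmetric. Translation $L_{\gamma_s}$ preserves each unstable affine leaf $w+V^u$'s parallel class. On a stable affine leaf $w+V^s$ it equals $L_\gamma$ composed with the unstable holonomy from $w+\gamma+V^s$ back to $w+\gamma_s+V^s$. Indeed, $w+\gamma_s$ is the intersection of $w+\gamma+V^u$ with $w+V^s$ shifted, i.e. the linear model's unstable holonomy is translation by $-\gamma_u$. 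Conjugating by $H$, which maps lifted leaves to affine leaves, gives
\[
\Phi_{\gamma_s}(p)=\wt W^s\bigl(\Phi_\gamma(\wt W^s\text{-leaf data})\bigr)\cap \wt W^u(p)\ \text{-type formula}.
\]
More precisely, $\Phi_{\gamma_s}(p)$ is the unique intersection point of $\wt W^u(p)$ with $\wt W^s(\Phi_\gamma(p))$. Global product structure on $\wt X$ follows from that of $V$ via $H$. Locally this point is expressed through holomorphic foliation charts, since $E^s$ and $E^u$ are holomorphic: in a holomorphic chart $(z,w)$ in which both foliations are coordinate foliations, the point is $(z(p),w(\Phi_\gamma(p)))$. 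This is holomorphic in $p$ because $\Phi_\gamma$ is. Continuation along chains of charts then makes $\Phi_{\gamma_s}$ holomorphic everywhere. Its inverse is $\Phi_{-\gamma_s}$, which is holomorphic by the same argument, so $\Phi_{\gamma_s}$ is biholomorphic. The main obstacle is here: the local expression requires a simultaneous holomorphic chart for both foliations. Since $E^s\oplus E^u=T^{1,0}$ with both distributions holomorphic and integrable, the holomorphic Frobenius theorem gives such product charts. However, the intersection point may lie far away, not within one chart. To handle this I would write $\Phi_{\gamma_s}$ as $\Phi_\gamma$ followed by unstable holonomy from $\wt W^s(\Phi_\gamma(p))$-leaves to $\wt W^s(p)$-side. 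Holonomy between stable leaves along a holomorphic unstable foliation is holomorphic, and globally defined by product structure. Joint holomorphicity in $p$ then follows from the local product charts along a path in the unstable leaf.

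\textbf{Step 3 (density and limits).} The set $G=\{v\in V:\Phi_v\text{ biholomorphic}\}$ is a subgroup, since $\Phi_{v+w}=\Phi_v\Phi_w$. By Step 2 it contains $p_s(\Gamma)+p_u(\Gamma)$, which is dense in $V$ by Lemma~\ref{lemma projection density}. Given $v\in V$, choose $v_n\in G$ with $v_n\to v$. Then $\Phi_{v_n}\to\Phi_v$ locally uniformly, since $H$ and $H^{-1}$ are continuous and $L_{v_n}\to L_v$ uniformly. Locally uniform limits of holomorphic maps are holomorphic, so $\Phi_v$ is holomorphic, and likewise $\Phi_{-v}=\Phi_v^{-1}$. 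Hence $\Phi_v$ is biholomorphic.
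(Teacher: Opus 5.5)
Your proposal is correct and is essentially the paper's argument: deck transformations are biholomorphic; the stable component, written by the paper through the holomorphic global product structure as $B\circ(\rho^s_\gamma\times\id)\circ B^{-1}$, is the map $p\mapsto\wt W^s(p)\cap\wt W^u(\Phi_\gamma(p))$, which is exactly your ``$\Phi_\gamma$ followed by unstable holonomy back to $\wt W^s(p)$''; and the proof concludes by density of $p_s(\Gamma)$, $p_u(\Gamma)$ and locally uniform limits. One slip to fix: the point $\wt W^u(p)\cap\wt W^s(\Phi_\gamma(p))$ in your ``more precisely'' sentence is $\Phi_{p_u\gamma}(p)$, not $\Phi_{p_s\gamma}(p)$, but your holonomy description is the correct one and the two components are treated symmetrically, so this does not affect the argument.
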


\begin{proof}
For every $\gamma\in\Gamma$, the map $\Phi_\gamma$ is a deck transformation
and hence is biholomorphic. We first show that $\Phi_{p_s\gamma}$ is
biholomorphic.

Since $E^s$ and $E^u$ are holomorphic subbundles, the lifted stable and
unstable foliations are transverse holomorphic foliations. Fix $o\in\wt X$
with $H(o)=0$ and set
\[
  S=\widetilde W^s(o),
  \qquad
  U=\widetilde W^u(o).
\]

The conjugacy gives global product structure on $\widetilde X$. Define
\[
 r_s(x)=\widetilde W^u(x)\cap S,\qquad
 r_u(x)=\widetilde W^s(x)\cap U.
\]
Both projections are holomorphic. The pair $(r_s,r_u)$ is a biholomorphism with inverse
\[
 B\colon S\times U\longrightarrow\widetilde X,
 \qquad B(s,u)=\widetilde W^u(s)\cap\widetilde W^s(u).
\]
Since $\Phi_\gamma$ preserves both lifted foliations, it has the following form in the product coordinates given by $B$:
\[      B^{-1}\circ\Phi_\gamma\circ B(s,u)
  =\bigl(\rho^s_\gamma(s),\rho^u_\gamma(u)\bigr),\]
where $\rho^s_\gamma\colon S\to S$ and $\rho^u_\gamma\colon U\to U$ are biholomorphisms.

Moreover, since $H$ preserves both lifted foliations, we have
\[ H(B(s,u))=H(s)+H(u),
  \qquad H(s)\in V^s,\quad H(u)\in V^u,\]
and 
\begin{align*}
B^{-1}\circ\Phi_\gamma\circ B(s,u)
  &=B^{-1}\circ H^{-1}\circ L_\gamma\circ H\circ B(s,u)\\
  &=\bigl(H^{-1}(H(s)+p_s\gamma),
      H^{-1}(H(u)+p_u\gamma)\bigr).
\end{align*}
It follows that
\[
  \rho^s_\gamma(s)=H^{-1}(H(s)+p_s\gamma)
\]
is holomorphic. Consequently,
\[B^{-1}\circ\Phi_{p_s\gamma}\circ B(s,u)
  =B^{-1}\circ H^{-1}\circ L_{p_s\gamma}\circ H\circ B(s,u)=(\rho^s_\gamma(s),u ).\]
The right-hand side is holomorphic, and hence $\Phi_{p_s\gamma}$ is holomorphic.

Now let $a_s\in V^s$. By Lemma~\ref{lemma projection density}, we can choose
$\gamma_j\in\Gamma$ such that $p_s\gamma_j\to a_s$. Then $\Phi_{p_s\gamma_j}\to\Phi_{a_s}$ locally uniformly, so $\Phi_{a_s}$ is holomorphic. Similarly, the inverses $\Phi_{-p_s\gamma_j}$ converge locally uniformly to $\Phi_{-a_s}=\Phi_{a_s}^{-1}$, which is therefore holomorphic as well. Thus $\Phi_{a_s}$ is biholomorphic.

The same argument in the unstable direction shows that $\Phi_{a_u}$ is
biholomorphic for every $a_u\in V^u$. Since every $v\in V$ has a unique
decomposition $v=a_s+a_u$ and
$\Phi_v=\Phi_{a_s}\circ\Phi_{a_u}$, the conclusion follows.
\end{proof}

The maps $\{\Phi_v:v\in V\}$ define a continuous, free, and transitive action
of $V$ on $\wt X$ by biholomorphisms. To show that this action is smooth, we use the following consequence of the Bochner--Montgomery theorem \cite{bochner_montgomery_1946}.

\begin{theorem}[Bochner--Montgomery]\label{theorem BM}
Let $G$ be a locally compact topological group acting continuously and effectively
on a connected smooth manifold $M$ by $C^\infty$ diffeomorphisms. Then $G$ is a Lie group and the action map $G\times M\to M$ is smooth.
\end{theorem}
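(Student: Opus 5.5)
The plan is to reduce the statement to the solution of Hilbert's fifth problem and then upgrade continuity of the action to smoothness by an averaging argument. It is a classical result that we only cite, so I sketch the standard route rather than a self-contained proof.

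\emph{Step 1: $G$ is a Lie group.} By Gleason--Yamabe, it suffices to show that $G$ has no small subgroups. Suppose $K_j\subset G$ are nontrivial compact subgroups contained in shrinking neighborhoods of the identity. Each $K_j$ acts on $M$ by $C^\infty$ diffeomorphisms. By continuity of the action, on a fixed coordinate ball the elements of $K_j$ are $C^0$-close to the identity. The intended mechanism is Newman's theorem: a compact group acting effectively on a connected manifold cannot have all orbits of arbitrarily small diameter. Hence, for large $j$, $K_j$ acts trivially on $M$, and effectiveness forces $K_j=\{e\}$, a contradiction.

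A $C^1$ alternative is available if we prefer to avoid the topological statement. Near a point $x_0$, average a chart over $K_j$ with Haar measure. Uniform closeness of $K_j$ to the identity is what makes the averaged map still a diffeomorphism near $x_0$. In the averaged chart $K_j$ acts linearly, and a compact linear group close to the identity is trivial. Thus $K_j$ fixes a neighborhood of $x_0$ pointwise, and propagating this along $M$ gives triviality. I expect this to be the main obstacle. The difficulty is that mere $C^0$ closeness of $K_j$ to $\id$ does not give $C^1$ closeness, so the averaging argument needs care. This is exactly where Bochner--Montgomery use a Newman-type topological lemma, and I would try that first.

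\emph{Step 2: smoothness of the action.} Once $G$ is a Lie group, it suffices to show that each one-parameter subgroup $t\mapsto\exp(tY)$ gives a flow $(t,x)\mapsto\exp(tY)\cdot x$ that is smooth in $(t,x)$. Smoothness of $G\times M\to M$ then follows from canonical coordinates of the second kind and smoothness in $x$ of each group element. For the one-parameter statement, I would regularize by convolution. Take $\psi\in C_c^\infty(\R)$ and a smooth chart function $u$, and set
\[
(\psi * u)(x)=\int\psi(s)\,u(\exp(sY)\cdot x)\,ds.
\]
A change of variables gives
\[
(\psi * u)(\exp(tY)\cdot x)=\int\psi(s-t)\,u(\exp(sY)\cdot x)\,ds,
\]
which is smooth in $t$. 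Moreover, $\psi*u\to u$ as $\psi$ concentrates at $0$. Choosing such regularized functions as coordinates near a point shows that the flow is differentiable in $t$. The generating vector field is then $C^\infty$, since its components are given by $\int\psi'(s)\,u(\exp(sY)\cdot x)\,ds$, which is smooth in $x$ because each $\exp(sY)$ is $C^\infty$ with derivatives locally uniform in $s$. Uniqueness of integral curves then identifies the group action with the smooth flow of this field.
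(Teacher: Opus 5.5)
The paper does not prove this theorem. It cites Bochner--Montgomery and applies the result only to $G=(V,+)$, which is already a Lie group, so only your Step~2 matters there. As a proof of the stated theorem, however, Step~1 has a genuine gap. Your preferred Newman route never uses differentiability; it uses only that the $K_j$ are $C^0$-close to the identity. If it worked, it would show that every locally compact group acting effectively by homeomorphisms on a connected manifold is a Lie group. That is the Hilbert--Smith conjecture, which is open in dimensions $\ge4$. Concretely, Newman's theorem is known for finite groups, and hence for compact Lie groups, since these contain nontrivial finite cyclic subgroups. But the small subgroups you must exclude may a priori be of $p$-adic type, such as $p^n\mathbb Z_p$, which contain no nontrivial finite subgroups. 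So differentiability has to enter exactly here, and the $C^1$ route is not optional.

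That route does work, but it needs the lemma you assert fails. For a continuous action of a locally compact group by $C^1$ maps, $C^0$-closeness of $g$ to $e$ does give $C^1$-closeness of $g$ to $\id$ on compact sets. To see this, take a compact set $C$ in a chart. The map $g\mapsto Dg|_C\in C^0(C)$ is the pointwise limit of the continuous difference quotients $g\mapsto n\bigl(g(\cdot+\xi/n)-g(\cdot)\bigr)$. So it is of Baire class one, and it is continuous at some $g_0$ because $G$ is a Baire space. The chain rule $D_yh=D_{g_0^{-1}y}(hg_0)\,D_y(g_0^{-1})$ then moves this continuity to $e$, and higher derivatives follow by induction.

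A second correction: Bochner's linearizing average requires a fixed point. At a general $x_0$, use instead $\phi(x)=\int_{K_j}u(kx)\,dk$. This map is $K_j$-invariant and, by the lemma, $C^1$-close to $u$. Hence it is injective near $x_0$, so $K_j$ fixes a neighborhood of $x_0$ pointwise. To propagate, take a boundary point of the interior of $\operatorname{Fix}(K_j)$. Bochner's linearization there conjugates $K_j$ to its isotropy representation. That representation fixes an open set of vectors, so it is trivial, which contradicts the choice of a boundary point. Connectedness and effectiveness then give $K_j=\{e\}$.

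The same lemma is what justifies your claim in Step~2 that the derivatives of $\exp(sY)$ are locally uniform in $s$. You need it both to differentiate $\psi*u$ under the integral sign and to obtain $D(\psi*u)(x_0)\to Du(x_0)$, so that the regularized functions form a chart. With the lemma supplied, Step~2 is sound.
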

In our case $G=(V,+)$ is a finite-dimensional Lie group acting freely on $\widetilde X$. Theorem~\ref{theorem BM} therefore makes the action smooth.  Choose $x_0\in\wt X$ with
$H(x_0)=0$ and consider the orbit map
\[
  \Theta\colon V\to\wt X,
  \qquad
  v\longmapsto\Phi_v(x_0)=H^{-1}(v).
\]
The kernel of $D_0\Theta$ is the Lie algebra of the stabilizer of $x_0$, which is zero because the action is free. Since $\dim_{\mathbb R}V=\dim_{\mathbb R}\widetilde X$, the orbit map is a local diffeomorphism. It is bijective by freeness and transitivity, and hence is a global $C^\infty$ diffeomorphism. Consequently, $H=\Theta^{-1}$ is a $C^\infty$ diffeomorphism.

Let $J_0:=H_*J_{\widetilde X}$ be the induced complex structure on $V$. Since all
translations are $J_0$-holomorphic, $J_0$ is translation-invariant. The conjugacy relation
\[
  H\circ F\circ H^{-1}=A
\]
and the holomorphicity of $F$ imply
\[
  AJ_0=J_0A.
\]
Thus $A$ is complex linear with respect to $J_0$, $\Gamma$ acts by holomorphic
translations, and
\[
  h\colon X\longrightarrow (V,J_0)/\Gamma
\]
is a biholomorphic conjugacy. This proves
Theorem~\ref{theorem anosov torus}.

\section{Proof of Theorem~\ref{theorem distribution}}\label{sec:distribution}

We begin with a criterion for the holomorphicity of sections. Recall that a sequence $f_n\in L^1_{\mathrm{loc}}(\Omega)$, where
$\Omega\subset\mathbb C^d$ is open, converges to $f\in L^1_{\mathrm{loc}}(\Omega)$ in the sense of distributions if
\[
    \int_\Omega f_n\varphi\,dV\longrightarrow
    \int_\Omega f\varphi\,dV
    \qquad\text{for every }\varphi\in C_c^\infty(\Omega),
\]
where $dV$ denotes Lebesgue measure. For sections of a holomorphic vector bundle, distributional convergence is understood coefficientwise in local
holomorphic frames.

\begin{lemma}\label{lem:holomorphic_limit_l2}
Let $X$ be a complex manifold, and let $E\to X$ be a holomorphic vector
bundle. Suppose that $s\in C^0(X,E)$ and $s_n\in C^0(X,E)$ satisfy
\[
    s_n\longrightarrow s
    \quad\text{and}\quad
    \bar\partial_Es_n\longrightarrow0
    \qquad\text{in the sense of distributions}.
\]
Then $s$ is a holomorphic section of $E$.
\end{lemma}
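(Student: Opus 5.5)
The statement is local, so we may work on a coordinate chart $\Omega\subset\mathbb C^d$ over which $E$ has a holomorphic frame $e_1,\dots,e_r$. Write $s=\sum_j u_je_j$ and $s_n=\sum_j u_{n,j}e_j$ with continuous coefficient functions. Because the frame is holomorphic, $\bar\partial_E s_n=\sum_j\bar\partial u_{n,j}\otimes e_j$. Here $\bar\partial u_{n,j}$ is understood in the distributional sense, since $s_n$ is only assumed continuous. By the convention fixed just before the lemma, the hypotheses become two statements about each coefficient $j$: first, $u_{n,j}\to u_j$ in $\mathcal D'(\Omega)$; second, for each $k$ the distributions $\partial u_{n,j}/\partial\bar z_k$ tend to $0$ in $\mathcal D'(\Omega)$.

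The key step is that distributional differentiation is sequentially continuous on $\mathcal D'(\Omega)$. Fix a test function $\varphi\in C_c^\infty(\Omega)$. Then
\[
\Bigl\langle \frac{\partial u_j}{\partial\bar z_k},\varphi\Bigr\rangle
=-\int_\Omega u_j\,\frac{\partial\varphi}{\partial\bar z_k}\,dV
=\lim_{n\to\infty}\Bigl(-\int_\Omega u_{n,j}\,\frac{\partial\varphi}{\partial\bar z_k}\,dV\Bigr)
=\lim_{n\to\infty}\Bigl\langle \frac{\partial u_{n,j}}{\partial\bar z_k},\varphi\Bigr\rangle=0 .
\]
The middle equality uses that $\partial\varphi/\partial\bar z_k$ is again a test function. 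Hence $\bar\partial u_j=0$ in the sense of distributions for every $j$.

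Now apply the distributional Cauchy--Riemann theorem \cite[Chapter~I, Lemma~(3.29)(a)]{demailly_2012}, as was already done in the proof of Lemma~\ref{lemma connection holomorphic}. It says that a locally integrable function with $\bar\partial u=0$ in $\mathcal D'$ agrees almost everywhere with a holomorphic function. Since $u_j$ is continuous, it equals that holomorphic function everywhere. So each coefficient is holomorphic, and therefore $s$ is holomorphic on $\Omega$.

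Holomorphicity is a local property, and being a holomorphic section does not depend on which holomorphic frame is used. Covering $X$ by such charts therefore completes the proof.

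The main subtlety is to read $\bar\partial_E s_n\to0$ correctly: the convergence is coefficientwise in a holomorphic frame. The frame must be holomorphic so that no zeroth-order terms appear when $\bar\partial_E$ is expressed in coordinates. With a general smooth frame one would have $\bar\partial_E s_n=\sum_j(\bar\partial u_{n,j}+\sum_i u_{n,i}\theta_{ij})\otimes e_j$ for some $(0,1)$-forms $\theta_{ij}$. Even then the argument goes through, because $u_{n,i}\theta_{ij}\to u_i\theta_{ij}$ in $\mathcal D'$ after multiplying by smooth functions. Beyond this point, the proof is the continuity of differentiation on distributions together with Weyl-type regularity for $\bar\partial$.
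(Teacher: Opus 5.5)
Your proposal is correct and follows the same route as the paper. Both use continuity of distributional differentiation to get $\bar\partial_E s=0$ coefficientwise in a local holomorphic frame, and then the distributional Dolbeault--Grothendieck lemma \cite[Chapter~I, Lemma~(3.29)(a)]{demailly_2012} to make the coefficients holomorphic; you additionally write out the test-function computation and the step from almost-everywhere to everywhere equality via continuity, which the paper leaves implicit.
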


\begin{proof}
Since differentiation is continuous in the distribution topology, we have
$\bar\partial_Es_n\to\bar\partial_Es$ in the sense of distributions.
Uniqueness of the distributional limit gives $\bar\partial_Es=0$.
In a local holomorphic frame of $E$, the coefficients of $s$ are therefore
holomorphic by the distributional form of the Dolbeault--Grothendieck lemma
\cite[Chapter~I, Lemma~(3.29)(a), with $p=q=0$]{demailly_2012}. Thus $s$ is holomorphic.
\end{proof}

Let $f\colon X\to X$ satisfy the assumptions of Theorem~\ref{theorem distribution}, so $\dim_{\mathbb C}E^s=1$. Put $d=\dim_{\mathbb C}X$. If $d=2$, both invariant bundles are holomorphic by \cite[Proposition~2.2]{ghys_1995}. We may therefore assume $d\ge3$.  By Theorem~\ref{theorem topo}, there exist a real torus
\[
  N=V/\Gamma,
  \qquad V\simeq\R^{2d},\quad \Gamma\simeq\Z^{2d},
\]
a hyperbolic linear automorphism $\alpha$ of $N$ and a H\"older homeomorphism $h\colon X\to N$ such that
\[
  h\circ f=\alpha\circ h.
\]
Let $\wt X$ be the universal cover of $X$, and let $F,A,H$ denote compatible lifts of $f,\alpha,h$, respectively. Then
\begin{equation}
  H\circ F=A\circ H.
\end{equation}
\begin{lemma}\label{lem:ha_closed_form}
There exist a nowhere-vanishing closed holomorphic 1-form $\theta$ on $X$ and a constant $\mu\in\mathbb C$ with $0<|\mu|<1$ such that
\begin{equation}\label{eq:ha_form_invariance}
    \ker\theta=E^u,
    \qquad f^*\theta=\mu\theta.
\end{equation}
\end{lemma}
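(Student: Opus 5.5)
The plan is to globalize the local data $(t,\sigma)$ of Lemma~\ref{lemma transverse coordinates} using the flat holomorphic connection $\nabla$ of Lemma~\ref{lemma connection holomorphic}. The main step is to show that the monodromy of $\nabla$ is trivial. Then $L$ has a global nowhere-vanishing $\nabla$-parallel holomorphic section $\sigma$, and we can write $\pi=\theta\otimes\sigma$ with $\theta$ a global holomorphic $(1,0)$-form. The computation \eqref{tt:covariantly_closed}, with $d^\nabla\pi=0$ and $\nabla\sigma=0$, gives $d\theta=0$. Since $\pi$ is fiberwise surjective, $\theta$ is nowhere vanishing, and $\ker\theta=\ker\pi=E^u$.

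\textbf{Trivial monodromy.} Lift everything to the universal cover $\wt X$. The local submersions $t$ glue, via the affine changes \eqref{tt:affine_changes}, to a developing map $\tilde t\colon\wt X\to\mathbb C$ with $d\tilde t\otimes\tilde\sigma=\tilde\pi$. For each deck transformation $\gamma\in\Gamma\cong\pi_1(X)$ this gives
\[
\tilde t\circ\gamma=\rho(\gamma)\,\tilde t+b(\gamma),\qquad \rho(\gamma)\in\mathbb C^*,
\]
where $\rho$ is the monodromy character of $\nabla$; it suffices to show $\rho\equiv1$.

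\begin{itemize}
\item First, $\tilde t$ is constant on lifted unstable leaves, since $\ker dt=E^u$.
\item Second, restricted to a lifted stable leaf $S$, $\tilde t$ is an affine function of the global linearization $\varphi_x$ of Lemma~\ref{lemma affine}, because $k_x$ is affine in $t$. Hence $\tilde t|_S\colon S\to\mathbb C$ is a biholomorphism.
\item By the global product structure coming from Theorem~\ref{theorem topo} and Lemma~\ref{lemma:global_unstable_holonomy}, the space of lifted unstable leaves is identified with $S$, hence with $\mathbb C$ via $\tilde t$.
\item In this identification, $\gamma$ acts on the unstable leaf space by the affine map $z\mapsto\rho(\gamma)z+b(\gamma)$.
\item Through $H$, the same action is topologically conjugate to translation by $p_s\gamma$ on $V^s$.
\item If $\gamma\neq0$, then $p_s\gamma\neq0$, because $\Gamma\cap V^u=0$ for a hyperbolic automorphism. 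So the action of $\gamma\neq0$ is free.
\item An affine map of $\mathbb C$ with $\rho(\gamma)\neq1$ has a fixed point. Therefore $\rho(\gamma)=1$ for every $\gamma$.
\end{itemize}

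This is the step I expect to be the main obstacle. The delicate part is checking carefully that the leaf-space identification is a homeomorphism onto $\mathbb C$ and is compatible with $H$. A weaker fallback is to use invariance: the condition $f^*\nabla\cong\nabla$ gives $\rho\circ A=\rho$, so $\rho$ is trivial on $(A-I)\Gamma$, which has finite index. This alone only shows that $\rho$ is torsion, which is why the fixed-point argument above is needed.

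\textbf{Invariance and the bound on $|\mu|$.} By $f^*\nabla=\nabla$ and \eqref{nt:pullback_connection}, the section $f_\#\sigma$ is parallel. As in the proof of Lemma~\ref{lemma:local_linearization}, since $X$ is connected this gives $Q_1(x)\sigma(x)=c\,\sigma(fx)$ for a single constant $c\in\mathbb C^*$. Substituting into $\pi_{fx}D_xf=Q_1(x)\pi_x$ yields $f^*\theta=\mu\theta$ with $\mu=c$.

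To see $|\mu|<1$, take $0\neq v\in E^s_x$; note that $\theta_x(v)\neq0$ because $\ker\theta=E^u$. Iterating the invariance and using the bounds \eqref{tt:bounds},
\[
|\mu|^n|\theta_x(v)|=|\theta_{f^nx}(S_n(x)v)|\le C\|\theta\|_{C^0}\tau^n|v|.
\]
Hence $|\mu|\le\tau<1$, and $\mu\neq0$ since $\mu=c\in\mathbb C^*$.
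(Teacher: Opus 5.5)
Your proof is correct and is essentially the paper's argument. Your developing map $\tilde t$ coincides, up to an affine change of coordinate, with the paper's $q(x)=\varphi(\widetilde W^u(x)\cap S)$. Both proofs then conclude the same way: deck transformations act freely on the space of lifted unstable leaves (since $\Gamma\cap V^u=0$), so the induced affine maps of $\mathbb C$ are translations, and the differential of the transverse coordinate descends to $X$.

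The differences are only in packaging. The paper gets affineness of the induced maps from the fact that biholomorphisms of $\mathbb C$ are affine, and it reads off $\mu$ at the fixed point $H^{-1}(0)$. You get affineness from the monodromy of $\nabla$ and $\mu$ from $f^*\nabla=\nabla$. The step you flag as delicate needs only a bijection between unstable leaves and $\mathbb C$, not a homeomorphism. That bijection follows from global product structure once the identity theorem extends $\varphi_x=B_x^{-1}(\tilde t-\tilde t(x))$ from a neighborhood of $x$ to the whole leaf.
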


\begin{proof}
Write $V=V^s\oplus V^u$ for the invariant splitting of $A$ and set $p=H^{-1}(0)$, $S=\widetilde W^s(p)$. Then $F(p)=p$ and $F(S)=S$. By Lemma~\ref{lemma affine}, 
\[\varphi\circ F|_S\circ\varphi^{-1}(z)=\mu z    \qquad\text{for some }0<|\mu|<1.\]
Define
\[
    q\colon\widetilde X\to\mathbb C,
    \qquad
    q(x)=\varphi\bigl(\widetilde{ W}^u(x)\cap S\bigr).
\]
The topological conjugacy gives global product structure, so the intersection defining $q$ exists and is unique. Locally, this projection onto $S$ is holonomy of the holomorphic foliation $\mathcal W^u$, followed by $\varphi$. Hence $q$ is a holomorphic submersion satisfying
\begin{equation}\label{eq:ha_universal_form}
    \ker dq=\widetilde E^u,
    \qquad q\circ F=\mu q.
\end{equation}
For each $\gamma\in\Gamma$, let
\[\Phi_\gamma=H^{-1}\circ L_\gamma\circ H\]
be the corresponding deck transformation, where $L_\gamma$ denotes translation by $\gamma$ on $V$. Then $\Phi_\gamma\colon\widetilde X\to\widetilde X$ is a biholomorphism preserving the lifted stable and unstable foliations. In particular, $\Phi_\gamma$ induces a biholomorphism $g_\gamma$ of $\mathbb C$ such that
\[
    q\circ \Phi_\gamma=g_\gamma\circ q.
\]
For $\gamma\ne0$, the map $\Phi_\gamma$ cannot preserve a lifted unstable leaf. Indeed, that would give $\gamma\in V^u\cap\Gamma$; but then $A^{-n}\gamma\in\Gamma$ would converge to zero, contradicting discreteness of $\Gamma$. Thus $g_\gamma$ has no fixed point and is therefore a translation map of $\mathbb C$. It follows that $dq$ is invariant under the deck group and descends to a nowhere-vanishing closed holomorphic 1-form $\theta$ on $X$. The identities in \eqref{eq:ha_form_invariance} follow from \eqref{eq:ha_universal_form}.
\end{proof}

Let $e$ be the continuous vector field uniquely determined by $e(x)\in E^s(x)$ and $\theta_x(e(x))=1$. We show that $e$ is holomorphic by constructing a sequence of smooth vector fields converging uniformly to $e$ whose antiholomorphic derivatives converge to zero in $L^2$, and then applying Lemma~\ref{lem:holomorphic_limit_l2}.
\begin{lemma}\label{lemma L2}
The vector field $e$ is holomorphic. In particular, $E^s$ is a holomorphically trivial line subbundle of $T^{1,0}X$.
\end{lemma}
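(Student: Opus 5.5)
The plan is to realize $e$ as a uniform limit of smooth vector fields obtained by pulling back a smooth approximation along $f$, renormalized by $\mu$ so that the normalization $\theta(\cdot)=1$ is preserved, and then to apply Lemma~\ref{lem:holomorphic_limit_l2} with $E=T^{1,0}X$.

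\textbf{The invariance of $e$.} By Lemma~\ref{lem:ha_closed_form}, for $x\in X$ we have $\theta_{fx}(D_xf\,e(x))=(f^*\theta)_x(e(x))=\mu$. Since $D_xf\,e(x)\in E^s_{fx}$, this gives $D_xf\,e(x)=\mu\,e(fx)$. Iterating,
\[
 e=\mu^{-n}(f^n)^*e,\qquad ((f^n)^*v)(x):=(D_xf^n)^{-1}v(f^nx).
\]

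\textbf{Approximations.} Choose a smooth $(1,0)$ vector field $e_0$ with $\theta(e_0)=1$. For instance, take a smooth $C^0$-approximation $\tilde e$ of $e$ and set $e_0=\tilde e/\theta(\tilde e)$. Put $e_n=\mu^{-n}(f^n)^*e_0$. Then $\theta(e_n)=1$, so $e_n-e$ takes values in $\ker\theta=E^u$. Since $f$ is holomorphic, $\bar\partial e_n=\mu^{-n}(f^n)^*\bar\partial e_0$. Moreover, applying $\bar\partial$ to $\theta(e_0)=1$ and using that $\theta$ is holomorphic gives $\theta(\bar\partial e_0)=0$. Hence the vector part of $\bar\partial e_0$ also lies in $E^u$, where $(Df^n)^{-1}$ contracts by $m(U_n)^{-1}\le C\lambda^{-n}$. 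This yields the pointwise bounds
\[
 |e_n-e|(x)\le C|\mu|^{-n}m(U_n(x))^{-1}\|e_0-e\|_{C^0},
\]
\[
 |\bar\partial e_n|(x)\le C|\mu|^{-n}\frac{\|U_n(x)\|}{m(U_n(x))}\,|\bar\partial e_0|(f^nx).
\]

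\textbf{The $L^2$ estimate.} I would integrate $|\bar\partial e_n|^2$ by the Jacobian trick from the proof of Lemma~\ref{lemma connection holomorphic}. Since $|S_n(x)|$ is uniformly comparable to $|\mu|^n$ (because $\theta$ is invariant up to $\mu$ and $e$ is bounded above and below), one has
\[
 J_\nu(f^n)\ge c\,|\mu|^{2n}\|U_n\|^2m(U_n)^{2(d-2)}.
\]
Changing variables then gives
\[
 \|\bar\partial e_n\|_{L^2}^2\le C\sup_x\Bigl(|\mu|^{-4n}m(U_n(x))^{-2(d-1)}\Bigr)\,\|\bar\partial e_0\|_{L^2}^2 .
\]
The main obstacle is that the factors $|\mu|^{-n}$ work against the unstable contraction. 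Nothing a priori bounds $|\mu|^{-2}$ by $\lambda^{d-1}$, so neither estimate need decay for a fixed $e_0$.

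\textbf{Handling the obstacle.} I would not fix $e_0$. Instead I would use a sequence $e_0^{(n)}$ of mollifications of the H\"older field $e$ (normalized by $\theta$) at scale $\delta_n$. These satisfy $\|e_0^{(n)}-e\|_{C^0}\le C\delta_n^\beta$ and
\[
 \|\bar\partial e_0^{(n)}\|_{L^2}\le\|\bar\partial e_0^{(n)}\|_{C^0}\le C\delta_n^{\beta-1}.
\]
The hope is that the $L^2$ norm of $\bar\partial e_0^{(n)}$ is much smaller than its sup. If not, I would first replace $f$ by a high power $f^k$ and try to gain the needed decay from the fact that $\bar\partial e_0$ only sees the non-holomorphic part of $e$ along directions contracted by $(Df^n)^{-1}$. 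Once both $e_n\to e$ uniformly and $\bar\partial e_n\to0$ in $L^2$ (hence in distributions), Lemma~\ref{lem:holomorphic_limit_l2} shows that $e$ is holomorphic. Since $e$ is nowhere vanishing and spans $E^s$, $E^s$ is then a holomorphically trivial line subbundle.
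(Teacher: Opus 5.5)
\textbf{There is a genuine gap, caused by a sign error.} Your first three steps follow the paper's proof almost verbatim, but the exponent of $\mu$ is wrong, and this derails the rest of the argument.

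From $D_xf^n e(x)=\mu^n e(f^nx)$ one gets $e(x)=\mu^n(D_xf^n)^{-1}e(f^nx)$, i.e.\ $e=\mu^{n}(f^n)^*e$, not $\mu^{-n}(f^n)^*e$. Since $(f^n)^*\theta=\mu^n\theta$ gives $\theta\bigl((f^n)^*e_0\bigr)=\mu^{-n}$, your $e_n=\mu^{-n}(f^n)^*e_0$ satisfies $\theta(e_n)=\mu^{-2n}$, not $1$. So $e_n-e$ does not take values in $E^u$, and $e_n$ does not converge to $e$. Indeed, $\mu^{-n}(f^n)^*e=\mu^{-2n}e$ blows up.

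The ``obstacle'' you identify, with factors $|\mu|^{-n}$ working against the unstable contraction, is an artifact of this error. Your last paragraph (mollifications at scale $\delta_n$, passing to $f^k$) is not an argument; it only states a hope. Even if the obstacle were real, it could not close the gap: $\|\bar\partial e_0^{(n)}\|$ grows as $\delta_n\to0$, so it cannot compensate an exponentially growing factor. As written, the proposal does not prove the lemma.

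\textbf{The fix is immediate and gives exactly the paper's proof.} Keep one fixed smooth $e_0$ with $\theta(e_0)=1$ and set $e_n=\mu^{n}(f^n)^*e_0$. Then $\theta(e_n)=1$ and $e_n-e=\mu^nU_n^{-1}(e_0-e)\circ f^n$. Your pointwise bounds now hold with $|\mu|^{-n}$ replaced by $|\mu|^{n}$, and $|\mu|<1$ helps rather than hurts:
\[
|e_n-e|\le C|\mu|^n m(U_n)^{-1}\|e_0-e\|_{C^0}\longrightarrow0 \quad\text{uniformly},
\]
\[
|\bar\partial e_n|(x)\le C|\mu|^n\|U_n(x)\|\,m(U_n(x))^{-1}|\bar\partial e_0|(f^nx).
\]
Combine the second bound with your (correct) Jacobian estimate $J_\nu(f^n)\ge c|\mu|^{2n}\|U_n\|^2m(U_n)^{2(d-2)}$ and change variables. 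This gives
\[
\|\bar\partial e_n\|_{L^2}^2\le C\sup_x m(U_n(x))^{-2(d-1)}\,\|\bar\partial e_0\|_{L^2}^2\le C\lambda^{-2(d-1)n}\|\bar\partial e_0\|_{L^2}^2\longrightarrow0 .
\]
The paper uses the cruder bound $|\det U_n|\ge\|U_n\|$ and obtains $\lambda^{-2n}$; either rate suffices. Lemma~\ref{lem:holomorphic_limit_l2} then shows that $e$ is holomorphic, with no mollification or passage to powers of $f$ needed.
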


\begin{proof}
The normalization of $e$ and the invariance of $E^s$ give
\[  D_xf^n e(x)=\mu^n e(f^n x),
    \qquad n\ge0.\]
Since $\theta$ is nowhere vanishing, we may choose a smooth section $e_0$ of $T^{1,0}X$ such that
$\theta(e_0)=1$. The section $e_0$ need not take values in $E^s$. Set
\begin{equation}\label{eq:ha_smooth_approximants}
    e_n(x)=\mu^n(D_xf^n)^{-1}e_0(f^n x),
    \qquad
    U_n(x)=D_xf^n|_{E^u_x}.
\end{equation}
Since $e_0-e$ takes values in $E^u$, we have
\[    e_n(x)-e(x)
    =\mu^n U_n(x)^{-1}(e_0-e)(f^n x).\]
Uniform expansion of $E^u$ therefore implies that $e_n\to e$ uniformly.

Since $f$ is holomorphic, differentiating \eqref{eq:ha_smooth_approximants} gives
\begin{equation}\label{eq deri}
    (\bar\partial e_n)_x=\mu^n(D_xf^n)^{-1}(\bar\partial e_0)_{f^nx}
           \circ\overline{D_xf^n}.
\end{equation}
Moreover, since $\theta$ is holomorphic and $\theta(e_0)=1$, we have
\[
    0=\bar\partial\bigl(\theta(e_0)\bigr)
     =\theta(\bar\partial e_0),
\]
which means that $\bar\partial e_0$ takes values in $E^u=\ker\theta$. Thus \eqref{eq deri} reduces to 
\begin{equation}\label{eq:ha_dbar_chain_rule}
    (\bar\partial e_n)_x
    =\mu^n U_n(x)^{-1}
      (\bar\partial e_0)_{f^nx}\circ\overline{D_xf^n}.
\end{equation}

Choose a smooth Hermitian metric on the holomorphic bundle $E^u=\ker\theta$ and extend it to $T^{1,0}X$ by requiring
\[ E^u\perp\mathbb C e_0,
    \qquad |e_0|=1.\]
All norms below are taken with respect to this metric. The letter $C$ denotes a positive constant independent of $x$ and $n$, whose value may change from one occurrence to the next.

Relative to the smooth orthogonal splitting $E^u\oplus\mathbb C e_0$, the relation $(f^n)^*\theta=\mu^n\theta$ gives the block matrix
\[
    D_xf^n=
    \begin{pmatrix}
        U_n(x)&b_n(x)\\
        0&\mu^n
    \end{pmatrix}.
\]

Let $\nu$ denote the associated real volume form. The real Jacobian is
\begin{equation}\label{eq:ha_real_jacobian}
    J_\nu(f^n)(x)=|\mu|^{2n}|\det U_n(x)|^2,
\end{equation}
where $|\det U_n(x)|$ denotes the modulus of the complex determinant computed in unitary frames.

Let $m(U_n(x))=\|U_n(x)^{-1}\|^{-1}$ denote the conorm of $U_n(x)$. Uniform hyperbolicity gives $\lambda>1$ such that, for every $x\in X$ and $n\geq1$,
\begin{equation}\label{eq U}
    m(U_n(x))\geq C^{-1}\lambda^n,
    \qquad
    \|D_xf^n\|\leq C\|U_n(x)\|.
\end{equation}

Choose $n_0$ so that $m(U_n(x))\geq1$ for all $x\in X$ and $n\geq n_0$. For such $n$, all singular values of $U_n(x)$ are at least one, and hence
\begin{equation}\label{eq U2}
      |\det U_n(x)|\geq\|U_n(x)\|.
\end{equation}

Since $e_0$ is smooth and $X$ is compact, $\bar\partial e_0$ is uniformly bounded. Combining \eqref{eq:ha_dbar_chain_rule}, \eqref{eq:ha_real_jacobian}, \eqref{eq U}, and \eqref{eq U2}, we obtain, for $n\geq n_0$,
\begin{equation}\label{eq:ha_jacobian_bound}
    \begin{aligned}
    |\bar\partial e_n(x)|^2
    &\le C|\mu|^{2n}\|U_n(x)^{-1}\|^2
       |\bar\partial e_0(f^nx)|^2\|D_xf^n\|^2\\
    &\le C\frac{|\mu|^{2n}\|U_n(x)\|^2}{m(U_n(x))^2}
    \le C\lambda^{-2n}J_\nu(f^n)(x).
    \end{aligned}
\end{equation}
Integrating \eqref{eq:ha_jacobian_bound} yields
\begin{equation}\label{eq:ha_l2_convergence}
    \begin{aligned}
        \|\bar\partial e_n\|_{L^2(\nu)}^2
        &\leq C\lambda^{-2n}\int_X J_\nu(f^n)\,d\nu\\
        &=C\lambda^{-2n}\nu(X)\longrightarrow0.
    \end{aligned}
\end{equation}
Here we use only that $f^n$ is an orientation-preserving diffeomorphism;
the volume form $\nu$ need not be invariant.

The $L^2$ convergence of $\bar\partial e_n$ to zero implies convergence in the sense of distributions. Likewise, the uniform convergence $e_n\to e$ implies distributional convergence. Lemma~\ref{lem:holomorphic_limit_l2} therefore makes $e$ holomorphic. Since $e$ is nowhere vanishing and spans $E^s$, this bundle is holomorphically trivial. This proves Theorem~\ref{theorem distribution}.
\end{proof}

\bibliographystyle{amsalpha}
\bibliography{ref}

\end{document}